\title{Low-dimensional irreducible rational representations of classical algebraic groups}
\newtheorem{theorem}{Theorem}[section]
\newtheorem{lemma}{Lemma}[section]
\newtheorem{proposition}[lemma]{Proposition}
\newtheorem{corollary}[lemma]{Corollary}
\theoremstyle{definition}
\theoremstyle{remark}
\newtheorem{remark}[lemma]{Remark}
\def \l {\lambda}
\def \a {\alpha}
\def \V {\tilde{V}}
\def \L {\tilde{L}}
\def \Z {\mathbb{Z}}
\DeclareMathOperator{\ch}{ch}
\DeclareMathOperator{\Div}{div}
\DeclareMathOperator{\rad}{rad}
\DeclareMathOperator{\Char}{char}
\DeclareMathOperator{\soc}{soc}
\begin{document}
\author{\'Alvaro L. Mart\'inez}
\date{\today}
\maketitle

\begin{abstract}
\noindent Let $G$ be an algebraic group of classical type of rank $l$ over an algebraically closed field $K$ of characteristic $p$. We list and determine the dimensions of all irreducible $KG$-modules $L$ with $\dim L < \binom{l+1}{4}$ if $G$ is of type $A_l$, and with $\dim L < 16 \binom{l}{4}$, if $G$ is of type $B_l$, $C_l$ or $D_l$.
\end{abstract}
\thispagestyle{empty}


\pagenumbering{arabic}

\section{Introduction}

Let $K$ be an  algebraically closed field of characteristic $p > 0$. For every  simply connected simple linear algebraic group $G$ over $K$ of rank $l$, the irreducible $KG$-modules with dimension below a bound proportional to $l^2$ were determined by Liebeck in \cite{liebeck}. Lübeck \cite{lubeck} extended these results taking a bound proportional to $l^3$. For groups of type $A_l$, this bound was $l^3/8$; for types $B_l$, $C_l$ and $D_l$, the bound was $l^3$. Extending this classification further is desirable for some applications (see for example \cite{halasi,lee}). The bound we take here is $\binom{l+1}{4}$ if $G$ is of type $A_l$, and $16 \binom{l}{4}$ if $G$ is of type $B_l$, $C_l$ or $D_l$.  

The irreducible $KG$-modules are parameterised by dominant weights $\l$, we denote them by $L(\l)$. Due to Steinberg's tensor product theorem, we need only consider the case where $\l$ is $p$-restricted. For small ranks ($l\le 20$ if $G$ has type $A_l$ and $l \le 11$ if $G$ has type $B_l$, $C_l$ or $D_l$), lists of weights $\l$ with $\dim L(\l)$ under the bound we consider can be found in \cite{lubpage}. There, similar lists for groups of exceptional type are also provided. We only consider groups of classical type.

Our results are summarised in the following two theorems. Throughout, $\epsilon_p(k)$ will denote 1 if $p$ divides $k$ and $0$ otherwise.\\

\begin{theorem} \label{thmA}
Let $G$ be a simply connected simple algebraic group of type $A_l$ and let $l \ge 9$. Table \ref{tableA} contains all nonzero $p$-restricted dominant weights $\l$ up to duals such that $\dim L(\l) < \binom{l+1}{4}$, as well as the dimensions of the corresponding modules $L(\l)$.\\
\end{theorem}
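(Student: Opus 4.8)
The plan is to reduce the problem to a finite computation combined with asymptotic estimates on Weyl module dimensions. The starting point is the observation that $\dim L(\lambda) \le \dim V(\lambda)$, where $V(\lambda)$ is the Weyl module, and that $\dim V(\lambda)$ is given by Weyl's dimension formula as a polynomial in $l$ whose degree equals the number of positive roots with nonzero coefficient of $\lambda$ in their expansion — roughly, a polynomial whose degree grows with the "support" and size of $\lambda$. First I would enumerate, for a weight $\lambda = \sum a_i \omega_i$ written in terms of fundamental dominant weights, the possible shapes that could conceivably give $\dim L(\lambda) < \binom{l+1}{4} \sim l^4/24$: since $\dim V(\lambda)$ grows at least like a fixed positive-degree polynomial in $l$ once $\lambda$ has enough support, only weights with very small support (at most a bounded number of nonzero $a_i$, each bounded in size) and weights supported near the ends of the Dynkin diagram survive. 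This is the combinatorial heart: classify all candidate $\lambda$ into finitely many families (e.g.\ $\omega_1$, $\omega_2$, $\omega_3$, $\omega_4$, $\omega_1+\omega_2$, $2\omega_1$, $3\omega_1$, $\omega_1+\omega_l$, etc.) parameterised by $l$.

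Next, for each surviving family I would compute $\dim V(\lambda)$ explicitly via Weyl's formula, obtaining a polynomial in $l$, and then check which families have $\dim V(\lambda) < \binom{l+1}{4}$ for all $l \ge 9$ (or for all sufficiently large $l$, handling the finitely many small $l$ separately, though here $l \ge 9$ is already imposed). The remaining, genuinely representation-theoretic step is to pass from $\dim V(\lambda)$ to $\dim L(\lambda)$: the Weyl module may be reducible in characteristic $p$, so for each candidate family I must determine the dimension of the irreducible head. Here I would invoke known results — Premet's theorem on the $p$-restricted case guarantees $L(\lambda)$ has the "expected" set of weights, and for the specific small families ($\omega_i$ for small $i$, $2\omega_1$, $\omega_1+\omega_{l}$, the adjoint-type weights, spin-type weights for $B_l, C_l, D_l$) the decomposition numbers are classical and recorded in the literature (Jantzen sum formula, Lübeck's tables \cite{lubeck}, \cite{lubpage}). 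The output is a case-by-case list of $\dim L(\lambda)$, typically $\dim V(\lambda)$ minus a correction term of the form $\epsilon_p(\text{something}) \cdot (\text{lower-order polynomial})$, which is exactly the form the theorem's table should take.

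For type $A_l$ specifically, I would organise the casework by the total "level" $\sum a_i$ and the support. Level-one weights are the fundamental weights $\omega_i$ with $\dim L(\omega_i) \le \dim V(\omega_i) = \binom{l+1}{i}$; this is $< \binom{l+1}{4}$ only for $i \le 3$ (and $i \ge l-2$ by duality), with possible $p$-divisibility dropping the dimension of $\wedge^i$ when $p \mid \binom{l+1}{i}$ — e.g.\ $\dim L(\omega_i) = \binom{l+1}{i} - \epsilon_p(\cdots)\binom{l+1}{i-2}$ type corrections. Level-two weights $2\omega_i$, $\omega_i + \omega_j$ correspond to $\mathrm{Sym}^2$ and various $\wedge^i V \otimes \wedge^j V$ constituents; only those with small $i, j$ near the ends survive. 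I would also need to rule out, using the Weyl-dimension lower bound, all weights with support size $\ge 3$ in the interior or with a coefficient $a_i \ge 2$ sitting at an interior node, since these force $\dim V(\lambda)$ past the bound once $l \ge 9$.

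The main obstacle I anticipate is twofold: first, making the asymptotic argument fully rigorous — one must prove a clean lower bound of the form "if $\lambda$ is not in the finite candidate list then $\dim V(\lambda) \ge \binom{l+1}{4}$ for all $l \ge 9$", which requires careful monotonicity arguments in Weyl's formula and honest bookkeeping of boundary cases near $l = 9$; second, for the handful of genuinely hard families where $V(\lambda)$ is reducible in small characteristic (notably $\omega_1 + \omega_l$ giving the adjoint module for $A_l$, and the various weights involving $\omega_2$, $\omega_3$), pinning down $\dim L(\lambda)$ exactly as a function of $l$ and $p$ rather than just a bound — this is where I would lean most heavily on the Jantzen sum formula and on cross-checking against the explicit small-rank data in \cite{lubpage}.
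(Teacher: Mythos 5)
There is a genuine gap in your reduction step. To show that every weight $\l$ \emph{not} in the table satisfies $\dim L(\l) \ge \binom{l+1}{4}$, you need a \emph{lower} bound on $\dim L(\l)$, but your plan derives the exclusion from the growth of $\dim V(\l)$ via Weyl's dimension formula ("these force $\dim V(\lambda)$ past the bound"). Since $L(\l)$ is only a quotient of $V(\l)$, and in characteristic $p$ the ratio $\dim V(\l)/\dim L(\l)$ can be large, $\dim V(\l) \ge \binom{l+1}{4}$ does not imply $\dim L(\l) \ge \binom{l+1}{4}$. The paper's exclusion argument instead bounds $\dim L(\l)$ from below by the size of Weyl group orbits of weights that are guaranteed to occur in $L(\l)$: always $\dim L(\l) \ge |\mathscr{W}\l| = |\mathscr{W}:\mathscr{W}_\l|$, and by Premet's theorem every subdominant $\mu \le \l$ contributes its orbit as well. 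Organised by the invariant $\kappa(\l)=\sum_i \min\{i,l+1-i\}a_i$ (following Magaard--R\"ohrle--Testerman), this gives $\dim L(\l) \ge \binom{l+1}{\kappa}$ for $\kappa \le (l+1)/2$, and the table is precisely the set of weights with $\kappa(\l)\le 3$. You do mention Premet's theorem, but only in the inclusion direction (identifying weights of $L(\l)$ for the candidate families); to close the gap you must redirect it, together with orbit-size estimates, at the exclusion step.

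Two smaller points. First, for type $A_l$ every fundamental weight is minuscule, so $L(\l_i)=V(\l_i)=\wedge^i V$ with no characteristic-dependent correction; your proposed terms $\binom{l+1}{i}-\epsilon_p(\cdots)\binom{l+1}{i-2}$ do not occur here (you may be thinking of type $C_l$). Second, for the genuinely reducible cases ($\l_1+\l_2$, $\l_1+\l_{l-1}$, $2\l_1+\l_l$) your plan to use the Jantzen sum formula would work for general $l$, but cross-checking against small-rank tables is not a proof for all $l \ge 9$; the paper instead quotes Seitz's formula for the multiplicity $m_\l(\l-(\a_i+\cdots+\a_j))=j-i+1-\epsilon_p(a_i+a_j+j-i)$, which immediately pins down $\ch L(\l)$ since all other subdominant weights have multiplicity one in $V(\l)$.
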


\begin{theorem} \label{thmBCD}
Let $G$ be a simply connected simple algebraic group of type $B_l$, $C_l$ or $D_l$ and let $l \ge 9$. Tables \ref{tableB}, \ref{tableC} and \ref{tableD} contain all nonzero $p$-restricted dominant weights $\l$ such that $\dim L(\l) < 16 \binom{l}{4}$, as well as the dimensions of the corresponding modules $L(\l)$. Note that for $p=2$ the modules for type $B_l$ and for type $C_l$ have the same dimensions; we only list them in Table \ref{tableC}.
\end{theorem}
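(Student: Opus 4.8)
The plan is to prove Theorem \ref{thmBCD} (and, by the same method, Theorem \ref{thmA}) by treating each classical type in turn and extending the analysis of \cite{lubeck} from its cubic bound to the present quartic one. Write $\l = \sum_{i=1}^{l} a_i \omega_i$ in terms of the fundamental dominant weights. Two things must be shown: that each weight in Tables \ref{tableB}--\ref{tableD} has the stated dimension (which one then checks is $< 16\binom{l}{4}$), and that no other nonzero $p$-restricted weight does. I would organise the completeness direction around a dichotomy: either $\l$ has one of finitely many ``shapes'' --- small support concentrated near the ends of the Dynkin diagram, with small coefficients, together with the spin weight $\omega_l$ and its small perturbations for $B_l$ and $D_l$ --- or $\dim L(\l)$ grows faster than $l^4$ and so exceeds the bound once $l$ is large.

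To reduce to finitely many shapes, the main inputs are: Premet's theorem, that the set of weights of $L(\l)$ contains the full Weyl-group orbit of every dominant $\mu \le \l$ (with the familiar exceptions when $p$ is small and $\l$ pairs badly with short roots), yielding a combinatorial lower bound for $\dim L(\l)$ by counting these orbits; the Weyl dimension polynomial, giving $\dim V(\l) \ge \dim L(\l)$; and the monotonicity $\dim L_G(\l) \ge \dim L_{L}(\l|_{L})$ under restriction to a Levi subgroup $L$ attached to a connected sub-diagram, which propagates bounds for smaller-rank classical groups and for type $A$. A weight whose support reaches deep into the diagram, or has a coefficient that is not small, or has support of size $\ge 3$ not concentrated at the ends, is forced by these estimates to have $\dim L(\l)$ of order $l^5$ or larger; eliminating such weights leaves the finite list appearing in the tables together with a few further candidates, which are discarded by a direct numerical estimate.

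For the stated dimensions, and for discarding the leftover candidates, I would compute $\dim L(\l)$ as an explicit function of $l$ and $p$ for each weight on the list. For $\omega_1$, the minuscule weights, and the spin modules of $B_l$ and $D_l$ the Weyl module is already irreducible and the Weyl dimension formula suffices. For the small weights $\omega_2, \omega_3, \omega_4, 2\omega_1, 3\omega_1, \dots$ I would invoke the known composition series of the corresponding Weyl modules (the results of Premet--Suprunenko, Gow, McNinch, Baranov--Suprunenko, and the data in \cite{lubeck} and \cite{lubpage}); the $p$-dependent corrections there are precisely what the $\epsilon_p(k)$ terms in the tables record. For the composite weights --- such as $\omega_1+\omega_2$, $\omega_1+\omega_l$, $2\omega_1+\omega_l$, $\omega_2+\omega_l$ --- where $L(\l)$ is a proper quotient of $V(\l)$, I would determine the composition factors of $V(\l)$ using linkage, translation functors and the Jantzen sum formula and then subtract, exploiting in the classical setting the realisation of these $L(\l)$ inside tensor products of the natural, dual and exterior-power modules to strip off the small factors directly. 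Finally, the coincidence of the $B_l$ and $C_l$ dimensions when $p = 2$ is accounted for by the special isogeny between $SO_{2l+1}$ and $Sp_{2l}$ in characteristic $2$.

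The main obstacle is this last piece of bookkeeping for the composite weights: there $L(\l) \subsetneq V(\l)$, the discrepancy depends on $p$ through more than one congruence condition, and determining the exact $\epsilon_p$ corrections requires either the right reference or a careful Jantzen-sum-formula computation. A secondary difficulty is keeping the reduction to finitely many shapes genuinely uniform in $l$: the quartic bound is permissive enough that one needs lower bounds somewhat sharper than those recorded in \cite{lubeck} to be sure no infinite family of weights slips below it.
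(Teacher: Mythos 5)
Your outline matches the architecture of the actual proof: orbit-size lower bounds via Premet's theorem for completeness, and known Weyl-module structure plus symmetric/tensor-power realisations for the dimensions. But at the two decisive points the proposal stops exactly where the work begins, and at one of them the tool you name would not suffice. For completeness, the vague dichotomy of ``shapes'' has to be replaced by a quantitative invariant that is uniform in $l$; the paper uses $\kappa(\l)=\sum i a_i$ and the auxiliary invariant $r_\l$ from Magaard--R\"ohrle--Testerman, together with their Propositions 4.7--4.9 and Lemma 4.5, to show that every weight with $\kappa(\l)\ge 4$ already has a subdominant weight whose Weyl orbit has size at least $16\binom{l}{4}$. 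The delicate residual cases are precisely the ones you wave at as ``small perturbations of the spin weight'': weights supported on $\{\l_{l-1},\l_l\}$ (or $\l_j+\l_l$), where the orbit of $\l$ itself can be small and one must descend to an explicit subdominant weight $\mu$ with $r_\mu\ge 2$ or $3$ before the orbit bound bites. You correctly sense that L\"ubeck's bounds are not sharp enough, but you do not supply the replacement.

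The more serious gap is in the dimension of $L(\l_1+\l_2)$. Your plan is to determine the composition factors of $V(\l_1+\l_2)$ by the Jantzen sum formula; this works except when $p=3$ and $3\mid t$ (with $t=2l,2l+1,2l-1$ for $B_l,C_l,D_l$), where the sum formula output is $\Div(3)\chi(\l_3)+\Div(t)\chi(\l_1)+\cdots$ and the coefficient of $[3]$ no longer determines whether $L(\l_1)$ occurs in $\rad V(\l_1+\l_2)$ (and with what multiplicity), since $\chi(\l_3)$ and $\chi(\l_1)$ both contribute to the same prime. This is not mere bookkeeping: the paper has to abandon the sum formula here and instead identify $L(\l_1+\l_2)$ as a section of $S^3V$, computing $\soc S^3V=\{v^3\}\oplus\{vQ\}$ and then pinning down the unique irreducible submodule of the quotient by restricting to a subgroup of type $A_{l-1}$ (for $D_l$) or $D_l$ (for $B_l$) and tracking how an explicit group element mixes the summands of the restriction. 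For type $C_l$ the analogous step uses Suprunenko--Zalesski\u{\i} on reduced symmetric powers to get $S^3V=L(\l_1+\l_2)\mid L(3\l_1)$ outright. Your phrase about ``realising $L(\l)$ inside tensor products of the natural module'' points in the right direction, but without this socle analysis the $\epsilon_p(3)$ and $\epsilon_p(t)$ corrections in the tables for $p=3$, $3\mid t$ are not established, and that is the one entry the rest of the literature does not already give you.
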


\begin{table}[H]
  \setlength{\tabcolsep}{2em}
  \renewcommand{\arraystretch}{1.4}
  \begin{tabular}{ l l } 
    \specialrule{.1em}{.05em}{.05em} 
    $\l$ & $\dim L(\l)$ \phantom{I just would like the tables to be equal in length.} \\ 
    \specialrule{.1em}{.05em}{.05em} 
    $\l_1$ & $l+1$ \\ 
    $\l_2$& $\binom{l+1}{2}$ \\ 
    $2\l_1$ & $\binom{l+2}{2}$\\ 
    $\l_1+\l_l$ & $(l+1)^2-1-\epsilon_p(l+1)$\\ 
    $\l_3$ & $\binom{l+1}{3}$\\ 
    $3\l_1$ & $\binom{l+3}{3}$ \\ 
    $\l_1+\l_2$ & $2\binom{l+2}{3}- \epsilon_p(3)\binom{l+1}{3}$\\ 
    $\l_1+\l_{l-1}$& $3\binom{l+2}{3}-\binom{l+2}{2}-\epsilon_p(l)(l+1)$\\ 
    $2\l_1+\l_{l}$& $3\binom{l+2}{3}+\binom{l+1}{2}-\epsilon_p(l+2)(l+1)$ \\ 
    \specialrule{.1em}{.05em}{.05em} 
  \end{tabular}
  \caption{Type $A_l$}
  Nonzero $p$-restricted dominant weights $\l$ such that $\dim L(\l) < \binom{l+1}{4}$ for $l \ge 9$.
  \label{tableA}
\vspace{1em}
  
  \setlength{\tabcolsep}{2em}
  \renewcommand{\arraystretch}{1.4}
  \begin{tabular}{ l l } 
    \specialrule{.1em}{.05em}{.05em} 
    $\l$ & $\dim L(\l)$\phantom{I just would like the tables to be equal in length.}\\ 
    \specialrule{.1em}{.05em}{.05em} 
    $\l_1$ & $2l+1$ \\ 
    $\l_2$& $\binom{2l+1}{2}$ \\ 
    $2\l_1$ & $\binom{2l+2}{2}-\epsilon_p(2l+1)$ \\ 
    $\l_3$ & $\binom{2l+1}{3}$ \\ 
    $3\l_1$& $\binom{2l+3}{3}-(2l+1)-\epsilon_p(2l+3)(2l+1)$\\ 
    $\l_1+\l_2$ & $2^4 \binom{l+\frac{3}{2}}{3}-\epsilon_p(l)(2l+1)-\epsilon_p(3)(\binom{2l+1}{3})$ \\ 
    $\l_{l}$ ($l\le 13$) & $2^{l}$ \\ 
    \specialrule{.1em}{.05em}{.05em} 
  \end{tabular}
  \caption{Type $B_l$, $p\neq 2$}
  Nonzero $p$-restricted dominant weights $\l$ such that $\dim L(\l) < 16 \binom{l}{4}$ for $l \ge 9$.
  \label{tableB}

\vspace{1em}

  \setlength{\tabcolsep}{2em}
  \renewcommand{\arraystretch}{1.4}
  \begin{tabular}{ l l } 
    \specialrule{.1em}{.05em}{.05em} 
    $\l$ & $\dim L(\l)$\\  
    \specialrule{.1em}{.05em}{.05em} 
    $\l_1$ & $2l$ \\ 
    $\l_2$& $\binom{2l}{2}-2l-\epsilon_{p}(l)$\\ 
    $2\l_1$ & $\binom{2l+1}{2}$ \\ 
    $\l_3$ & $\binom{2l}{3}-2l-\epsilon_p(l-1)(2l)$\\
    $3\l_1$& $\binom{2l+2}{3}$\\ 
    $\l_1+\l_2$ & $2^4 \binom{l+1}{3}-\epsilon_p(2l+1)(1-\epsilon_p(3))(2l)-\epsilon_p(3)(\binom{2l}{3}-2l)$ \\ 
    $\l_{l}$ ($l\le 13$, $p=2$)& $2^{l}$  \\ 
    \specialrule{.1em}{.05em}{.05em} 
    
  \end{tabular}
  \caption{Type $C_l$}
  Nonzero $p$-restricted dominant weights $\l$ such that $\dim L(\l) < 16 \binom{l}{4}$ for $l \ge 9$.
  \label{tableC}
\end{table}

\begin{table}
  \setlength{\tabcolsep}{2em}
  \renewcommand{\arraystretch}{1.4}
  \begin{tabular}{ l l } 
    \specialrule{.1em}{.05em}{.05em} 
    $\l$ & $\dim L(\l)$\phantom{I just would like the tables to be equal in lengt} \\ 
    \specialrule{.1em}{.05em}{.05em} 
    $\l_1$ & $2l$ \\ 
    $\l_2$& $\binom{2l}{2}-\epsilon_{p}(2)(1+\epsilon_{p}(l))$ \\ 
    $2\l_1$ & $\binom{2l+1}{2}-1-\epsilon_p(l)$ \\ 
    $\l_3$ & $\binom{2l}{3}-\epsilon_p(2)(1+\epsilon_p(l))(2l)$ \\
    $3\l_1$& $\binom{2l+2}{3}-2l-\epsilon_{p}(l+1)(2l)$\\ 
    $\l_1+\l_2$ &  $2^4 \binom{l+1}{3}-\epsilon_p(2l-1)(2l)-\epsilon_p(3)\binom{2l}{3}$\\ 
    $\l_{l-1}$ ($l \le 15$) & $2^{l-1}$\\ 
    $\l_{l}$ ($l \le 15$) & $2^{l-1}$ \\
    \specialrule{.1em}{.05em}{.05em} 
  \end{tabular}
  \caption{Type $D_l$}
  Nonzero $p$-restricted dominant weights $\l$ such that $\dim L(\l) < 16 \binom{l}{4}$ for $l \ge 9$.
  \label{tableD}
\end{table}

\begin{remark}
The bounds have been taken so that (using the notation in Section \ref{reduction}) the tables include exactly the $\l$ with $\kappa(\l)\le 3$. In particular, they should exclude the weight $\l_4$, and in fact, $| \mathscr{W} \l_4 | = \binom{l+1}{4},16\binom{l}{4}$ respectively if $G$ has type $A_l$ or one of the other types.  
\end{remark}

In Section \ref{reduction} it is shown that Tables \ref{tableA}, \ref{tableB}, \ref{tableC} and \ref{tableD} contain all the weights that need to be considered. The dimensions of the modules in the tables are established in Section \ref{dimensions}.

\section*{Acknowledgements}
Above all I would like to thank my supervisor Professor Martin Liebeck for his encouragement,  guidance and precious comments. I am also thankful for the financial support of the Undergraduate Research Opportunities Program at Imperial College London.

\section{Preliminaries} \label{prelim}
Let $p$ and $K$ be as in the introduction and let $G$ be a simply connected cover of a simple classical algebraic group of rank $l$ over $K$. Let $B=UT$ be a Borel subgroup containing the maximal torus $T$ and with unipotent radical $U$, and let $B^-$ be the opposite Borel subgroup. Denote by $X(T),Y(T)$ respectively the character and cocharacter groups of $T$. Let $\Phi \subset Y(T)$ be the root system of $G$ and denote by $S=\{\a_1,...,\a_l\}\subset \Phi$ the base of simple roots for $B$, where we label Dynkin diagrams as in \cite{bourbaki}. We denote by $\l_1,...,\l_l$ the corresponding fundamental weights with respect to the usual pairing on $X(T)\times Y(T)$. The Weyl group $\mathscr{W}=N_G(T)/T$ is generated by the simple reflections $s_\a$ associated to the simple roots $\a \in S$. We denote by $w_0 \in \mathscr{W}$ the longest element of the Weyl group.\\
We recall some standard facts. The irreducible $KG$-modules are parameterised by dominant weights $\l \in X(T)$, that is, weights of the form $\l=\sum_{i=1}a_i \l_i$ with all $a_i\ge 0$. We denote the irreducible module with highest weight $\l$ by $L(\l)$. Given a $KG$-module $M$, we say that the element $\mu \in X(T)$ is a weight of $M$ if and only if the weight space $M_\mu=\{m \in M: tm=\mu(t)m\text{ for all } t \in T\}$ is nonzero, and we say that the multiplicity of $\mu$ in $M$ is $\dim M_\mu$. If $\l$ is dominant, we denote the multiplicity of $\mu$ in $L(\l)$ by $m_\l(\mu)$. A dominant weight as above is $p$-restricted if each $a_i$ satisfies $0\le a_i <p$. Steinberg's tensor product theorem \cite{steinberg} allows one to express any $L(\l)$ as a tensor product of twists of $KG$-modules with $p$-restricted highest weights, therefore we only consider $p$-restricted dominant weights.\\
In order to understand the module $L(\l)$, it can be useful to understand the related induced module $H^0(\l)$ and Weyl module $V(\l)$. In \cite{jantzen}, the group $G$ is regarded as a group scheme. Given a dominant $\l\in X(T)$ and the corresponding $B^-$-module $K_\l$, one constructs a left exact functor $\mathrm{ind}_{B^-}^G(-)$ whose derived functors are denoted by $H^i(\l)$. The induced module is simply $H^0(\l)$. In this framework, the Weyl module is defined thus: $V(\l)=H^0(-w_0\l)^*$. The induced module $H^0(\l)$ has a unique irreducible submodule isomorphic to $L(\l)$, that is, $L(\l)=\soc H^0(\l)$. Dually, $V(\l)$ has $L(\l)$ as its unique irreducible quotient, so we can also realise the latter as $L(\l)=V(\l)/\rad V(\l)$.\\
Write $e(\l) \in \Z[X(T)]$ for the element of the group ring of $X(T)$ corresponding to $\l\in X(T)$. Denote the formal character of a (finite-dimensional) $KG$-module $M$ by $\ch M=\sum_{\mu\in X(T)}\dim M_\mu e(\mu)$. Observe that $\ch M$ encodes all information about weight multiplicities of $M$ and in particular, it yields $\dim M$. For any dominant $\l\in X(T)$, one can compute $\ch V(\l)=\ch H^0(\l)$ using Weyl's character formula. There is an Euler characteristic defined for each $\mu \in X(T)$ as $\chi(\mu)=\sum_{i\ge0}(-1)^i \ch H^i(\l)$. If $\l$ is dominant, then Kempf's vanishing theorem states that $H^i(\l)=0$ for all $i>0$. Lastly, recall that both the $\chi(\l)=\ch H^0(\l)$ and the $\ch L(\l)$ with $\l$ dominant form $\Z$-bases of $\Z[X(T)]^\mathscr{W}$.

Finally, we remark that there is an isogeny $\varphi$ between the groups of type $B_l$ and $C_l$ when $p=2$, which is an isomorphism of abstract groups but not of algebraic groups. If $L(\l)$ is the irreducible module with highest weight $\l$ for one of the groups, then composing the action of the group with $\varphi$ yields the corresponding irreducible module for the other group. In particular, the weight multiplicities and dimensions are the same for both types. For this reason, we exclude the case $p=2$ in Table \ref{tableB}.

\section{Dimensional bounds and  reduction}\label{reduction}
We recall some basic weight theory. There is a partial order $\le$ on $X(T)$ defined by: $\mu \le \l$ if and only if $\l-\mu$ is a nonnegative linear combination of simple roots. Denote the (usual) action of the Weyl group on $X(T)$ by $(w,\mu)\mapsto w\mu$. Then $m_\l(\mu)=m_\l(w\mu)$. Assume $\l$ and $\mu$ are both dominant. If $\mu \le \l$ and $\mu \neq \l$, we say that $\mu$ is subdominant to $\l$. The weights of $L(\l)$ form a subset of the weights of $V(\l)$, and every dominant weight of $V(\l)$ is subdominant to $\l$. It follows that $\dim L(\l)=\sum_{\mu\le \l}m_\l(\mu)|W\mu|$, where $\mu$ runs over all dominant weights. The stabiliser of $\mu$ in $\mathscr{W}$ is the subgroup $\mathscr{W}_\mu \le \mathscr{W}$ generated by the reflections $s_\a$ such that $\langle \mu, \a \rangle =0$. Hence if $\mu$ is a weight of $L(\l)$ we have the bound
\begin{equation}\label{basic}
    \dim L(\l) \ge |\mathscr{W}\mu|=|\mathscr{W}:\mathscr{W}_\mu|.
\end{equation}
Finally, Premet's theorem \cite{premet} asserts that if $(G,p)$ is not special and $\l$ is $p$-restricted, then any $\mu\le \l$ is a weight of $L(\l)$. For the classical types, the pair $(G,p)$ is special only if $p=2$ and $G$ has type $B_l$ or $C_l$.

We now proceed to show that any dominant weight $\l$ not in Tables \ref{tableA}, \ref{tableB}, \ref{tableC} and \ref{tableD} must satisfy $\dim L(\l)\ge \binom{l+1}{4} $ if $G$ has type $A_l$ or $\dim L(\l)\ge 16\binom{l}{4} $ if $G$ has type $B_l$, $C_l$ or $D_l$. Following \cite{magaard}, for a $p$-restricted dominant weight $\mu =\sum_{i=1}^l a_i \l_i \in X(T)$, we define the integers
\[   
\kappa(\mu) = 
     \begin{cases}
       \sum_{i=1}^l i a_i & \quad\text{if }G\text{ is of type }B_l, C_l\text{ or }D_l\\
       \sum_{i=1}^l \min \{i,l+1-i\}a_i & \quad\text{if }G\text{ is of type }A_l\\
     \end{cases}
\]
\[   
r_{\mu} = 
     \begin{cases}
       0 & \quad\text{if }a_c=0\text{ for all }c> \frac{l+1}{2}\\
       \max \{c : 1 \le c < \frac{l+1}{2} \text{ and }a_{l+1-c}\neq 0\} & \quad\text{otherwise.}\\
     \end{cases}
\]

\begin{proposition}\label{abound}
Let $G$ be of type $A_l$ and let $\l$ be a $p$-restricted dominant weight. Set $\kappa := \kappa(\l)$. Then,
\[
\dim L(\l) \ge\begin{cases}
 \binom{l+1}{\kappa} \quad\text{if $\kappa \le \frac{l+1}{2}$,}\\
 \binom{l+1}{\lfloor l/2\rfloor} \quad\text{otherwise.} 
\end{cases}
\]
\end{proposition}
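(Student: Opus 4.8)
The plan is to combine the orbit bound \eqref{basic} with Premet's theorem: since $(G,p)$ is never special in type $A_l$, every dominant $\mu\le\l$ is a weight of $L(\l)$, so $\dim L(\l)\ge|\mathscr{W}\mu|$ for \emph{any} dominant $\mu\le\l$, and it suffices to exhibit a suitable $\mu$ with a large Weyl orbit. Write $n=l+1$, so $\mathscr{W}=S_n$ acts on $n$ points and $\l_i$ may be identified with the partition $(1^i,0^{n-i})$. For a dominant $\mu=\sum c_i\l_i$ the stabiliser $\mathscr{W}_\mu$ is the Young subgroup whose blocks are the maximal runs of consecutive points cut out by the positions $i$ with $c_i\neq0$; hence $|\mathscr{W}\mu|$ is the multinomial coefficient $\binom{n}{g_1,\dots,g_t}$ whose parts $g_1,\dots,g_t$ are the successive differences of $\{0\}\cup\mathrm{supp}(\mu)\cup\{n\}$. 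In particular $|\mathscr{W}\mu|\ge\binom{n}{i}$ for each $i\in\mathrm{supp}(\mu)$, and if $\mathrm{supp}(\mu)=\{a,\,n-b\}$ with $a+b\le n$ then $|\mathscr{W}\mu|=\binom{n}{a,\,n-a-b,\,b}=\binom{a+b}{a}\binom{n}{a+b}\ge\binom{n}{a+b}$.

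The core step is an explicit construction. Split $\l=\l^-+\l^+$ with $\l^-=\sum_{i\le n/2}a_i\l_i$ and $\l^+=\sum_{i>n/2}a_i\l_i$, and set $\kappa^\pm=\kappa(\l^\pm)$, so $\kappa=\kappa^-+\kappa^+$. I would first prove: if $\nu$ is a dominant weight supported on $\{1,\dots,\lfloor n/2\rfloor\}$ with $\kappa(\nu)\le n$, then $\l_{\kappa(\nu)}\le\nu$. Indeed, identifying the weights in the down-set of $\nu$ with partitions (all of the same size, since $\le$ restricts to dominance of partitions), $\nu$ is a partition of $\sum_i i a_i(\nu)=\kappa(\nu)$, and $\l_{\kappa(\nu)}=(1^{\kappa(\nu)},0^{\dots})$ is the minimum of the dominance order on partitions of $\kappa(\nu)$. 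Dually $\l_{n-\kappa(\nu')}\le\nu'$ when $\nu'$ is supported on $\{\lceil n/2\rceil+1,\dots,n-1\}$, via $-w_0$. Thus, \emph{provided $\kappa\le n/2$} (so $\kappa^\pm\le n$), the weight $\mu:=\l_{\kappa^-}+\l_{n-\kappa^+}$ is dominant and $\le\l$; its support is $\{\kappa^-,n-\kappa^+\}$ (or a single point if $\kappa^-$ or $\kappa^+$ vanishes), and the multinomial identity above gives $|\mathscr{W}\mu|\ge\binom{n}{\kappa}$, the claimed bound when $\kappa\le\frac{l+1}{2}$.

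For $\kappa>\frac{l+1}{2}$ I would reduce to the previous case. Put $W=\{\lfloor(n-1)/2\rfloor,\lfloor n/2\rfloor,\lceil(n+1)/2\rceil\}$; one checks $\binom{n}{k}\ge\binom{n}{\lfloor(n-1)/2\rfloor}=\binom{l+1}{\lfloor l/2\rfloor}$ for every $k\in W$ (using $\binom{n}{n/2+1}=\binom{n}{n/2-1}$ when $n$ is even), and $W$ contains two consecutive integers. The aim is a dominant $\mu'\le\l$ with $\kappa(\mu')\in W$: then $\kappa(\mu')\le n$, so applying the construction of the previous paragraph to $\mu'$ yields a dominant $\mu\le\mu'\le\l$ with $|\mathscr{W}\mu|\ge\binom{n}{\kappa(\mu')}\ge\binom{l+1}{\lfloor l/2\rfloor}$. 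To produce $\mu'$: the set of dominant weights $\le\l$ is the interval, in the dominance order on partitions of $N=|\l|$ with at most $n$ parts, from $\l$ down to the balanced partition $\mu_0$; this interval is connected by covering relations, each covering relation moves a single box between two rows, $\kappa$ is weakly decreasing along it, and — since $\kappa(\mu)=\sum_{i\le\lfloor n/2\rfloor}b_i-\sum_{i>\lceil n/2\rceil}b_i$ for $\mu=(b_1,\dots,b_n)$ — it drops by at most $2$ at each step. Since $\kappa(\mu_0)\le\lfloor n/2\rfloor<\kappa(\l)$, the set of $\kappa$-values attained on this interval has no gap of size $\ge3$; as $W\cap[\kappa(\mu_0),\kappa(\l)]$ still contains two consecutive integers, some value of $W$ is attained.

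The hard part is the case $\kappa>\frac{l+1}{2}$: both the choice of the target window $W$ — this is precisely why the bound in that regime is $\binom{l+1}{\lfloor l/2\rfloor}$ rather than the central binomial — and the claim that $\kappa$ cannot skip over $W$ as we descend, which relies on the single-box description of covering relations together with the identity $\kappa(\mu)=\sum_{i\le\lfloor n/2\rfloor}b_i-\sum_{i>\lceil n/2\rceil}b_i$. Everything else — the multinomial identities, the degenerate cases $\kappa^\pm=0$ and $\l=0$, the computation of $\mathrm{supp}(\mu)$, and the bound $\kappa(\mu_0)\le\lfloor n/2\rfloor$ — is routine bookkeeping.
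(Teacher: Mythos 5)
Your argument is correct in substance, but it is a genuinely different proof from the paper's: the paper disposes of Proposition \ref{abound} in one line by citing Proposition 4.7 and Lemma 4.8 of \cite{magaard}, whereas you reprove the bound from scratch using the type-$A$ dictionary between dominant weights and partitions, Premet's theorem, and Young-subgroup orbit counts. Your first case ($\kappa\le\frac{l+1}{2}$) is clean: the splitting $\l=\l^-+\l^+$, the observation that $(1^{\kappa(\nu)})$ is minimal in dominance order among partitions of $\kappa(\nu)\le n$, and the multinomial identity $|\mathscr{W}\mu|=\binom{\kappa^-+\kappa^+}{\kappa^-}\binom{n}{\kappa^-+\kappa^+}$ all check out, and Premet applies since type $A_l$ is never special. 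Your second case also works, but three points deserve tightening: (i) the single-box description of covering relations in the dominance order is Brylawski's theorem and should be cited or proved (all that is actually needed is the weaker and easily proved fact that between comparable partitions there is a chain of single-box moves staying inside the down-set of $\l$); (ii) the dual support set should read $\{\lceil n/2\rceil,\dots,n-1\}$ for odd $n$, since $i>n/2$ includes the middle index $(n+1)/2$ (harmless, as its dual index still lands in $\{1,\dots,\lfloor n/2\rfloor\}$); (iii) you state the core construction "provided $\kappa\le n/2$" but then apply it to $\mu'$ with $\kappa(\mu')$ as large as $\lceil(n+1)/2\rceil$ --- this is fine because the construction really only needs $\kappa\le n$, but the hypothesis should be stated that way from the start. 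What your route buys is a self-contained proof independent of \cite{magaard}; what the citation buys is brevity, and the argument in \cite{magaard} is in any case of the same flavour (exhibiting a subdominant weight with large Weyl orbit).
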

\begin{proof}
Both bounds are respectively part of Proposition 4.7 and Lemma 4.8 in \cite{magaard}.
\end{proof}
\begin{corollary}
Let $G$ be of type $A_l$, and assume $l\ge 9$. Any nonzero $p$-restricted dominant weight $\l \in X(T)$ not listed in Table \ref{tableA} satisfies $\dim L(\l)\ge \binom{l+1}{4}$.
\end{corollary}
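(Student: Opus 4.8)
The plan is to argue by contraposition: I will show that any nonzero $p$-restricted dominant weight $\l$ with $\kappa(\l) \ge 4$ already satisfies $\dim L(\l) \ge \binom{l+1}{4}$, and then that the only nonzero $p$-restricted dominant weights with $\kappa(\l) \le 3$ are, up to duals, exactly those appearing in Table \ref{tableA}. Combining these two facts gives the statement.

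For the first part I would simply feed Proposition \ref{abound} into the monotonicity of binomial coefficients. If $4 \le \kappa(\l) \le \frac{l+1}{2}$, then $\dim L(\l) \ge \binom{l+1}{\kappa(\l)} \ge \binom{l+1}{4}$, since $\binom{l+1}{k}$ is nondecreasing for $0 \le k \le \frac{l+1}{2}$. If instead $\kappa(\l) > \frac{l+1}{2}$, then $\dim L(\l) \ge \binom{l+1}{\lfloor l/2\rfloor}$, and because $l \ge 9$ forces $4 \le \lfloor l/2\rfloor \le \frac{l+1}{2}$, the same monotonicity yields $\binom{l+1}{\lfloor l/2\rfloor} \ge \binom{l+1}{4}$. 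Either way the desired bound holds.

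For the second part I would use the explicit form of $\kappa$. Writing $\l = \sum_i a_i \l_i$ with $a_i \ge 0$, we have $\kappa(\l) = \sum_i \min\{i, l+1-i\}\, a_i$, and for $l \ge 9$ the coefficient $\min\{i, l+1-i\}$ equals $1$ precisely for $i \in \{1,l\}$, equals $2$ precisely for $i \in \{2, l-1\}$, equals $3$ precisely for $i \in \{3, l-2\}$, and is at least $4$ for all other $i$. Hence $\kappa(\l) \le 3$ forces $a_i = 0$ for $i \notin \{1,2,3,l-2,l-1,l\}$ together with $a_1 + 2a_2 + 3a_3 + 3a_{l-2} + 2a_{l-1} + a_l \le 3$. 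Enumerating the finitely many solutions and then identifying each weight with its dual (in type $A_l$, $-w_0$ interchanges $\l_i$ and $\l_{l+1-i}$) produces exactly the nine weights $\l_1$, $\l_2$, $2\l_1$, $\l_1+\l_l$, $\l_3$, $3\l_1$, $\l_1+\l_2$, $\l_1+\l_{l-1}$, $2\l_1+\l_l$ of Table \ref{tableA}; if $p \le 3$ some of these fail to be $p$-restricted and simply do not occur, which is harmless.

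This is essentially a bookkeeping argument, so I do not anticipate a genuine obstacle; the one point that deserves care is to make sure the hypothesis $l \ge 9$ is actually used, and it is, in two places: it guarantees $\lfloor l/2\rfloor \ge 4$ in the monotonicity step, and it ensures the index sets $\{1,2,3\}$ and $\{l-2,l-1,l\}$ are disjoint, so that the enumeration has no small-$l$ coincidences and really does reduce to Table \ref{tableA}.
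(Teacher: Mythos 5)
Your proposal is correct and follows exactly the paper's route: the paper's proof is a one-line appeal to Proposition \ref{abound} together with the observation that Table \ref{tableA} lists precisely the nonzero $p$-restricted dominant weights with $\kappa(\l)\le 3$, and your argument simply makes explicit the monotonicity of $\binom{l+1}{k}$ and the enumeration of weights with $\kappa(\l)\le 3$ up to duals that the paper leaves implicit.
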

\begin{proof}
The result follows from Proposition \ref{abound} and the observation that Table \ref{tableA} contains precisely the nonzero $p$-restricted dominant weights $\l$ with $\kappa(\l)\le 3$. 
\end{proof}
The following can be seen as an analogue of Proposition \ref{abound} for types $B_l$, $C_l$ and $D_l$.\\

\begin{proposition}\label{almost}
Let $G$ be of type $B_l$, $C_l$ or $D_l$, assume $l\ge 7$ and let $\l$ be a $p$-restricted dominant weight $\sum_{i=1}^l a_i\l_i$. Set $\kappa := \kappa(\l)$. Then, the following hold.
\begin{enumerate}[label=(\alph*)]
    \item Assume $r_\l\neq 0$ and, if $G$ has type $D_l$, assume $r_\l\ge3$. Then $| \mathscr{W}\l| \ge 2^{l-r_\l+1}\binom{l}{r_\l-1}$. In particular, $\dim L(\l) \ge 2^{l-r_\l+1}\binom{l}{r_\l-1}$.
    \item If $r_\l = 0$ and $\kappa \le (l+1)/2$, then $\dim L(\l) \ge 2^\kappa \binom{l}{\kappa}$.
    \item If $r_\l=0$ and $\kappa>(l+1)/2$, then $\dim L(\l) \ge 2^{\left\lfloor{\frac{l+2}{2}} \right\rfloor}\binom{l}{\left\lfloor{\frac{l+2}{2}} \right\rfloor}$.
\end{enumerate}
\end{proposition}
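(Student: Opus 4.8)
The plan is to bound $\dim L(\l)$ from below by the size of a single Weyl orbit, via \eqref{basic}: the orbit of $\l$ itself for part (a), and the orbit of a suitably chosen fundamental weight $\l_s$ that is subdominant to $\l$ --- hence a weight of $L(\l)$ by Premet's theorem --- for parts (b) and (c). Throughout I work in the standard coordinates $\epsilon_1,\dots,\epsilon_l$: writing $\l=\sum_i a_i\l_i=\sum_j b_j\epsilon_j$ with $b_1\ge b_2\ge\cdots$, one has $b_j-b_{j+1}=a_j$ away from the spin node(s), $\kappa=\sum_j b_j$ whenever no spin node is active, and $\mathscr{W}_\l$ is the product of the symmetric groups permuting equal coordinates together with the group of signed permutations of the zero coordinates. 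A short computation --- using that the factor $2$ by which $|\mathscr{W}|$ for $B_l,C_l$ exceeds $|\mathscr{W}|$ for $D_l$ is matched in the stabiliser --- gives $|\mathscr{W}\l_s|=2^s\binom{l}{s}$ for all three types whenever $s<l$.

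For part (a), set $k=l-r_\l+1$. The definition of $r_\l$ gives $a_k\neq0$, so $b_1\ge\cdots\ge b_k\ge1$ and (when $k<l$) $b_k-b_{k+1}=a_k>0$. Thus the first $k$ coordinates are positive and strictly larger than the remaining $r_\l-1$ coordinates, which forces $\mathscr{W}_\l\le S_k\times W'$, where $W'$ is the Weyl group acting on the last $r_\l-1$ coordinates --- of type $B_{r_\l-1}$ for $B_l,C_l$ and of type $D_{r_\l-1}$ for $D_l$. (The hypothesis $r_\l\ge3$ for $D_l$ also guarantees that the node $k=l-r_\l+1\le l-2$ is of vector type, so the passage between the $a_i$ and the $b_j$ is the naive one; the cases $r_\l\in\{1,2\}$ for $D_l$ behave differently and are excluded.) The index $|\mathscr{W}:\mathscr{W}_\l|\ge|\mathscr{W}|/(k!\,|W'|)$ then evaluates, in each of the three cases, to exactly $2^{l-r_\l+1}\binom{l}{r_\l-1}$; and the remaining sub-case $r_\l=1$ (where $a_l\neq0$ makes every coordinate positive) gives orbit size $\ge2^l$ at once.

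For part (b), $r_\l=0$ forces $a_l=0$ (and $a_{l-1}=0$ for $D_l$), so $\l$ has integral coordinates summing to $\kappa$, at most $\kappa$ of which are nonzero, and $\l$ lies in the same coset of the root lattice as $\l_\kappa$. Since every partition of $\kappa$ dominates $(1^\kappa)$, this yields $\l_\kappa\le\l$; as $\kappa\le(l+1)/2<l$ (and $\le l-2$ for $D_l$, which holds since $l\ge7$), $\l_\kappa$ is a genuine fundamental weight, so $\dim L(\l)\ge|\mathscr{W}\l_\kappa|=2^\kappa\binom{l}{\kappa}$. Part (c) runs the same way with $s=\lfloor(l+2)/2\rfloor$, except that while for $B_l$ no parity condition intervenes, for $C_l$ and $D_l$ one may instead have to take $s=\lfloor(l+2)/2\rfloor+1$ to match the coset of $\l$ --- still $\le\kappa$ because $\kappa>(l+1)/2$. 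In either case $\l_s\le\l$ by the same partition-domination argument, and $2^s\binom{l}{s}\ge2^{\lfloor(l+2)/2\rfloor}\binom{l}{\lfloor(l+2)/2\rfloor}$ because $s\le(2l-1)/3$ for $l\ge7$, which is exactly the range in which $m\mapsto2^m\binom{l}{m}$ is still non-decreasing.

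I expect two points to need the most care. The first is the bookkeeping around the spin nodes of $B_l$ and $D_l$: when $a_l$ (or $a_{l-1}$) is nonzero, both the conversion between the $a_i$ and the $b_j$ and the precise description of $\mathscr{W}_\l$ require attention, and it is exactly this that the hypothesis $r_\l\ge3$ is designed to keep under control in part (a). The second, and more serious, is the special pairs $(B_l,2)$ and $(C_l,2)$, where Premet's theorem does not apply: for parts (b) and (c) one must then argue by other means --- e.g. via the isogeny $B_l\leftrightarrow C_l$ and the explicit structure of the small Weyl modules of $C_l$ in characteristic $2$, or by locating $\l_s$ inside a low tensor power of the natural module --- that the chosen subdominant weight $\l_s$ still occurs in $L(\l)$, and then verify that the stated bounds survive even though these modules are in fact smaller (which is why the orbits chosen above have a little room to spare).
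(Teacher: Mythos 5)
Your part (a) is sound and is essentially the paper's own argument: since $a_{l-r_\l+1}\neq 0$, the stabiliser satisfies $\mathscr{W}_\l\subseteq\mathscr{W}_{\l_{l-r_\l+1}}$, so $|\mathscr{W}\l|\ge|\mathscr{W}\l_{l-r_\l+1}|=2^{l-r_\l+1}\binom{l}{r_\l-1}$, and your observation that the hypothesis $r_\l\ge 3$ keeps the relevant node away from the fork of the $D_l$ diagram is exactly the role that hypothesis plays.

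The genuine gap is in parts (b) and (c). Your only mechanism for showing that the chosen fundamental weight $\l_\kappa$ (resp.\ $\l_s$) is actually a \emph{weight of $L(\l)$}, rather than merely subdominant to $\l$, is Premet's theorem, and Premet's theorem fails precisely for the special pairs $(B_l,2)$ and $(C_l,2)$, where the weights of $L(\l)$ can form a proper subset of those of $V(\l)$. You flag this yourself in your last paragraph, but ``one must then argue by other means'' followed by two candidate strategies is not a proof: as written, (b) and (c) are unestablished for $p=2$ in types $B_l$ and $C_l$. The paper does not meet this difficulty because it does not route (b) and (c) through Premet at this point: part (b) is quoted directly from Proposition 4.7 of \cite{magaard}, which is proved in all characteristics, and in part (c) the case $a_d=1$ (which contains all of $p=2$, since $p$-restrictedness forces $p>2$ whenever $a_d>1$) is handled by Lemma 4.5 of \cite{magaard}, which supplies a subdominant weight of \emph{nonzero multiplicity} of the form $\sum b_i\l_i+\l_{\lfloor(l+1)/2\rfloor+1}$, after which part (a) is applied; Premet is invoked only in the complementary case $a_d>1$. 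To complete your version you would need either to import those two results or to verify directly (e.g.\ from the known description of weights of $2$-restricted irreducibles for $C_l$ in characteristic $2$) that your chosen $\l_s$ survives in $L(\l)$.

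Otherwise your route through (b) and (c) is a legitimate, more self-contained alternative to the paper's citations: the dominance argument $\l_\kappa\le\l$ and the parity adjustment of $s$ for $C_l$ and $D_l$ are correct, and the monotonicity of $m\mapsto 2^m\binom{l}{m}$ is the right tool, though the inequality you actually need is $\lfloor(l+2)/2\rfloor\le(2l-1)/3$ (so that the single step from $\lfloor(l+2)/2\rfloor$ to $\lfloor(l+2)/2\rfloor+1$ does not decrease the value), not $s\le(2l-1)/3$, which fails for $l=7$, $s=5$.
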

\begin{proof}
For part (a), note that the stabiliser $\mathscr{W}_\l$ is contained in $\mathscr{W}_{\l_{l-r_\l+1}}$ and use Bound (\ref{basic}). Part (b) follows from Proposition 4.7 in \cite{magaard} and the observation that $(l+1)/2\le l-3$. For (c), we argue as in the proof of 4.9(c) in \cite{magaard}. Define $d=\max\{i : a_i\neq 0\}$. We consider two cases.\\
 If $a_d=1$, then Lemma 4.5 from \cite{magaard} ensures that $L(\l)$ has a subdominant weight (with nonzero multiplicity) of the form $\sum_{i=1}^{\left \lfloor \frac{l+1}{2} \right \rfloor}b_i \l_i+\l_{\left\lfloor \frac{l+1}{2} \right\rfloor+1}$ and so by (a), we have $\dim L(\l)\ge |\mathscr{W}\mu|\ge 2^{\left\lfloor{\frac{l+2}{2}} \right\rfloor}\binom{l}{\left\lfloor{\frac{l+2}{2}} \right\rfloor}$. Otherwise if $a_d > 1$, then $\mu'=\mu-\a_d=\sum_{i=1}^{\left \lfloor \frac{l+1}{2} \right \rfloor}c_i \l_i+\l_{d+1}$ is subdominant to $\l$. By the previous case, there exists in turn some $\nu\le \mu'$ of the form $\sum_{i=1}^{\left \lfloor \frac{l+1}{2} \right \rfloor}b_i' \l_i+\l_{\left\lfloor \frac{l+1}{2} \right\rfloor+1}$. To see that $\nu$ has nonzero multiplicity in $L(\l)$, observe that $\l$ is $p$-restricted and $a_d>1$, hence $p>2$ and Premet's theorem applies. Again, applying (a) to $\nu$ yields the desired inequality.
\end{proof}
\begin{corollary}
Let $G$ be of type $B_l$, $C_l$ or $D_l$ and assume $l\ge 9$. Any nonzero $p$-restricted dominant weight $\l \in X(T)$ not listed in Tables \ref{tableB}, \ref{tableC} and \ref{tableD} satisfies $\dim L(\l)\ge 16\binom{l}{4}$.
\end{corollary}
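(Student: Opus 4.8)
The plan is to deduce the Corollary from Proposition \ref{almost}. One first notes that every nonzero $p$-restricted dominant weight $\l=\sum a_i\l_i$ with $\kappa(\l)\le 3$ lies among $\l_1,\l_2,\l_3,2\l_1,3\l_1,\l_1+\l_2$, each of which appears in every one of Tables \ref{tableB}, \ref{tableC} and \ref{tableD}; consequently a weight not in the tables has $\kappa:=\kappa(\l)\ge 4$, and if in addition it is one of the fundamental (half-)spin weights $\l_l$ (types $B_l$, $C_l$) or $\l_{l-1},\l_l$ (type $D_l$), then $l\ge 14$ in types $B_l$, $C_l$ and $l\ge 16$ in type $D_l$, since those are the ranks the tables omit. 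I would then split on the value of $r_\l$, using throughout the elementary fact that $k\mapsto 2^k\binom{l}{k}$ is nondecreasing for $k\le(2l-1)/3$, a range which for $l\ge 9$ contains every integer from $4$ to $\lfloor(l+2)/2\rfloor$.

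\emph{The cases $r_\l=0$ and $r_\l$ large.} Suppose first $r_\l=0$. If $\kappa\le(l+1)/2$, Proposition \ref{almost}(b) together with the monotonicity above gives $\dim L(\l)\ge 2^{\kappa}\binom{l}{\kappa}\ge 16\binom{l}{4}$; if $\kappa>(l+1)/2$, Proposition \ref{almost}(c) gives $\dim L(\l)\ge 2^{\lfloor(l+2)/2\rfloor}\binom{l}{\lfloor(l+2)/2\rfloor}\ge 2^{5}\binom{l}{5}\ge 16\binom{l}{4}$ for $l\ge 9$. Now suppose $r_\l\ge 2$ in types $B_l$, $C_l$, or $r_\l\ge 3$ in type $D_l$. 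Then Proposition \ref{almost}(a) yields $\dim L(\l)\ge 2^{l-r_\l+1}\binom{l}{r_\l-1}$; the sequence $s\mapsto 2^{l-s}\binom{l}{s}$ is unimodal (increasing for $s\le(l-2)/3$), so over the admissible range of $s=r_\l-1$ its minimum is attained at the smallest admissible value, namely $2^{l-1}l$ for types $B_l$, $C_l$ and $2^{l-2}\binom{l}{2}$ for type $D_l$. Both are at least $16\binom{l}{4}$: each inequality reduces to one of the form $3\cdot 2^{l-c}\ge N$, where $N$ is a product of two or three positive integers each less than $l$, which holds at $l=9$ and for all larger $l$ by induction, the left side doubling while $N$ grows by a factor below $2$.

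\emph{The remaining cases.} These are $r_\l=1$ in types $B_l$, $C_l$ and $r_\l\in\{1,2\}$ in type $D_l$. Put $d=\max\{i:a_i\neq 0\}$; the hypothesis on $r_\l$ forces $d\in\{l-1,l\}$. Suppose $a_d\ge 2$; then $\l$ being $p$-restricted forces $p>2$, so $\l-\a_d$ is a dominant weight of $L(\l)$ by Premet's theorem, and from the relevant Cartan entries ($\a_l=2\l_l-\l_{l-1}$ for $B_l$, $\a_l=2\l_l-2\l_{l-1}$ for $C_l$, $\a_l=2\l_l-\l_{l-2}$ and $\a_{l-1}=2\l_{l-1}-\l_{l-2}$ for $D_l$) one sees that $\l-\a_d$ has a nonzero coefficient at node $l-1$ (types $B_l$, $C_l$) or at node $l-2$ (type $D_l$); hence $r_{\l-\a_d}\ge 2$ (resp. $\ge 3$), and the previous paragraph applied to $\l-\a_d$ gives $\dim L(\l)\ge 16\binom{l}{4}$. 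Suppose instead $a_d=1$ and $\l$ is not a fundamental (half-)spin weight; then $\l$ has a nonzero coefficient at some node $<d$, so by a sliding argument exactly as in the proof of Proposition \ref{almost}(c), using Lemma 4.5 of \cite{magaard}, $L(\l)$ has a subdominant weight $\mu$ of the form $\sum_{i\le\lfloor(l+1)/2\rfloor}b_i\l_i+\l_{\lfloor(l+1)/2\rfloor+1}$; a short computation gives $r_\mu=\lfloor l/2\rfloor\ge 3$, so by Proposition \ref{almost}(a) $\dim L(\l)\ge|\mathscr{W}\mu|\ge 2^{\lceil l/2\rceil+1}\binom{l}{\lfloor l/2\rfloor-1}$, which dwarfs $16\binom{l}{4}$ for $l\ge 9$. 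Finally, if $\l$ is a fundamental (half-)spin weight, then: in type $D_l$ one has $\dim L(\l)=2^{l-1}\ge 16\binom{l}{4}$ for $l\ge 16$; in type $B_l$, and in type $C_l$ with $p=2$, one has $\dim L(\l_l)=2^{l}\ge 16\binom{l}{4}$ for $l\ge 14$; and in type $C_l$ with $p$ odd, where $\l_l$ never appears in Table \ref{tableC}, the dominant weight $\l_{l-2}<\l_l$ is a weight of $L(\l_l)$ by Premet's theorem and has $r_{\l_{l-2}}=3$, whence $\dim L(\l_l)\ge 2^{l-2}\binom{l}{2}\ge 16\binom{l}{4}$.

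\emph{The main obstacle} is the last case: extracting the precise list of fundamental (half-)spin weights and matching the exact rank thresholds $l\le 13$, $l\le 15$ in the tables; handling the special pairs $(G,p)$ with $p=2$, for which Premet's theorem is unavailable but Lemma 4.5 of \cite{magaard} still yields genuine weights of $L(\l)$ (and for which $a_d\le 1$, so the subcase $a_d\ge 2$ does not arise); and checking that every non-(half-)spin weight with small $r_\l$ indeed carries a second nonzero coefficient to feed into that lemma.
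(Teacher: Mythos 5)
Most of your argument runs parallel to the paper's: the reduction to $\kappa(\l)\ge 4$, the use of Proposition \ref{almost}(a)--(c) together with monotonicity of $k\mapsto 2^k\binom{l}{k}$, the treatment of $a_d\ge 2$ by subtracting $\a_d$ to create a nonzero coefficient further from the end of the diagram, and the explicit rank thresholds for the (half-)spin weights (which you actually make more precise than the paper does). However, there is a genuine gap in your subcase ``$a_d=1$ and $\l$ is not a fundamental (half-)spin weight''. There you invoke Lemma 4.5 of \cite{magaard} to produce a subdominant weight of $L(\l)$ of the form $\mu=\sum_{i\le\lfloor(l+1)/2\rfloor}b_i\l_i+\l_{\lfloor(l+1)/2\rfloor+1}$. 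That lemma is a rightward-pushing device: in Proposition \ref{almost}(c) it is applied only when $r_\l=0$, so that all nonzero coefficients of $\l$ sit at nodes $\le (l+1)/2$ and the excess $\kappa>(l+1)/2$ can be slid outward to node $\lfloor(l+1)/2\rfloor+1$. In your situation $d\in\{l-1,l\}$, and a weight of the stated form simply need not exist. Concretely, take $G$ of type $B_l$ and $\l=\l_1+\l_l$: every weight $\sum_{i\le m}b_i\l_i+\l_{m+1}$ with $m+1<l$ lies in the root lattice, whereas $\l_1+\l_l$ does not (it is congruent to the spin weight modulo the root lattice), so no such $\mu$ can satisfy $\mu\le\l$. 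The same obstruction kills the case $\l=\l_{l-1}+\l_l$ in type $D_l$. So this branch of your proof, as written, fails on precisely the weights it is meant to handle.

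The conclusion in that branch is nevertheless correct, and the paper reaches it without any sliding or appeal to Premet: if $a_j\neq 0$ for some $j<l$ (resp.\ $j<l-1$ in type $D_l$) in addition to $a_d\neq 0$ with $d\in\{l-1,l\}$, then the stabiliser satisfies $\mathscr{W}_\l\le\mathscr{W}_{\l_j+\l_l}$, whence $\dim L(\l)\ge|\mathscr{W}\l|\ge|\mathscr{W}(\l_j+\l_l)|=2^l\binom{l}{j}$ (resp.\ $2^{l-1}\binom{l}{j}$), which already exceeds $16\binom{l}{4}$ for $l\ge 9$; the only weights this leaves in type $D_l$ are $a_{l-1}\l_{l-1}+a_l\l_l$, which are dispatched by subtracting a short string of simple roots (e.g.\ $\a_{l-2}+\a_{l-1}+\a_l$) to reach a weight with $r_\mu=3$. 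Replacing your Lemma 4.5 step by this direct Weyl-orbit bound also removes your worry about the special pairs $(G,p)$ with $p=2$, since $\l$ itself is certainly a weight of $L(\l)$ and no appeal to Premet's theorem is needed there. A second, more minor point: your claim that the unimodality of $s\mapsto 2^{l-s}\binom{l}{s}$ forces its minimum over the admissible range to occur at the smallest $s$ is not a consequence of unimodality alone; you also need to check the right endpoint $s=r_\l-1$ with $r_\l$ maximal, although that check does succeed for all $l\ge 9$.
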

\begin{proof}
Write $\l= \sum_{i=1}^l a_i\l_i$. First note that all the weights with $\kappa(\l)< 4$ appear in the three tables, so assume $\kappa(\l)\ge 4$. Now, if $r_\l=0$, the result directly follows from (b) and (c) of Proposition \ref{almost}, so we further assume $r_\l\neq 0$. If $r_\l \ge 2$ or, if $G$ has type $D_l$, if $r_\l \ge 3$, then Proposition \ref{almost}(a) shows that $\dim L(\l) \ge 2^{l-r_\l+1}\binom{l}{r_\l-1}$. An elementary check shows that this is greater or equal to $ 2^4 \binom{l}{4}$ for all $l\ge 9$. We discuss the remaining possibilities separately.\\
\underline{Types $B_l$ and $C_l$, $r_\l=1$}\\
In view of the tables, the weight $\l=\l_l$ needs only be considered when $G$ has type $C_l$ and $p \neq 2$. In this case by Premet's theorem $\l_l-(\a_{l-1}+\a_l)=\l_{l-2}$ has nonzero multiplicity in $L(\l)$ so Proposition \ref{almost}(a) yields the result. Next, if $a_j>0$ for some $j<l$, the stabiliser $\mathscr{W}_\l \le \mathscr{W}$ is a subgroup of $\mathscr{W}_{\l_j+\l_l}$. But then in both types $|\mathscr{W}\l|\ge |\mathscr{W}(\l_j+\l_l)|= 2^l\binom{l}{j}\ge 2^ll\ge 2^4\binom{l}{4}$. Finally, if $a_l>1$, then $p\neq 2$ and by Premet's theorem $\mu=\l-\a_{l-1}$ is a subdominant weight for both types; but then $r_\mu=2$ and Proposition \ref{almost}(a) yields the inequality.\\
\underline{Type $D_l$, $r_\l=1,2$}\\
We do not consider $\l=\l_{l}, \l_{l-1}$ as they appear in Table \ref{tableD}. If $a_j\neq 0$ for some $j<l-1$, we see that in all cases $\mathscr{W}_\l \subset \mathscr{W}_{\l_j+\l_l}$, so similarly as for $B_l$ and $C_l$, we get $|\mathscr{W}\l| \ge 2^{l-1}\binom{l}{j} \ge 2^{l-1}l\ge 2^4 \binom{l}{4}$, since also $l \ge 9$. We thus assume that $\l$ is of the form $a_{l-1}\l_{l-1}+a_l\l_{l}$. Note that by Premet's theorem and bound (\ref{basic}) it is enough to exhibit a weight $\mu\le \l$ whose orbit has size greater than $2^4\binom{l}{4}$. If $a_{l-1}a_l \neq 0$, set $\mu=\l-(\a_{l-2}+\a_{l-1}+\a_l)=\l_{l-3}+(a_{l-1}-1)\l_{l-1}+(a_l-1)\l_l$; if $a_l=0$ and $a_{l-1}>1$, set $\mu=\l-\a_{l-1}=\l_{l-2}+(a_{l-1}-2)\l_{l-1}$; and if $a_{l-1}=0$ and $a_l>1$, set $\mu=\l-\a_l=\l_{l-2}+(a_l-2)\l_l$. Then $r_\mu= 3$ and we conclude with another application of Proposition \ref{almost}(a).
\end{proof}
\begin{remark}\label{remark}
The weights considered in Theorem 5.1 in \cite{lubeck} are precisely the $p$-restricted weights $\l$ with $\kappa(\l)\le 2$.
\end{remark}
\section{Dimensions of irreducible $KG$-modules}\label{dimensions}
An effective method to compute the multiplicities of the weight spaces in characteristic $p$ was already observed in \cite{burgoyne}. One considers a certain bilinear form on $V(\l)_\Z$, a minimal admissible $\Z$-lattice of the Weyl module. Restricting this form to the weight space of $\mu \le \l$, reducing it modulo $p$ and computing its rank yields the multiplicity $m_\l(\mu)$. However, these computations can be lengthy and may not a priori provide much structural information about $V(\l)$ or $L(\l)$. For some cases we will instead find the constituents of modules having $L(\l)$ as a composition factor. Given a $KG$-module $M$, we write $M=N_1\mid N_2 \mid \cdots \mid N_k$ to indicate that $M$ has a filtration $M=M_1\supset M_2 \supset \cdots \supset M_{k+1}=0$ such that $M_i/M_{i+1} \cong N_i$ for each $i=1,...,k$. Note that the $N_i$ are not required to be irreducible.

The following tool provides information about $\rad V(\l)$, and it can be interpreted as providing the determinants of the bilinear forms above (this is explained in detail in II.8.17 of \cite{jantzen}). Denote by $H^0_\Z(\l)$ and $V_\Z(\l)$ the induced and Weyl modules over $\Z$. Then in II.8.16 of \cite{jantzen} one defines a homomorphism $V_\Z(\l)\rightarrow H^0_\Z(\l)$ which we denote by $T_\l$ such that $\mathrm{Im}(T_\l\otimes_\Z 1_K)=L(\l)$. Now, let $D$ be the group of divisors of $\Z$, that is, the abelian group generated by the formal elements $[q]$ for each prime $q$. Given a finitely generated torsion abelian group $N$ and a prime $q$, denote by $\nu_q(N)$ the composition length of $N\otimes_\Z \Z_{(q)}$ as a $\Z_{(q)}$-module. For a $G_\Z$-module $M$, define 
\[\nu^c(M)=\sum_{\mu \in X(T)} \nu(M_\mu)e(\mu)\in D\otimes_\Z\Z[X]^\mathscr{W}\]
where $\nu(M_\mu)=\sum_{q \text{ prime}}\nu_q(M_\mu)[q]\in D$.\\

Then, writing $\nu^c(T_\l)$ for $\nu^c(\mathrm{coker}(T_\l))$, for each subdominant $\mu\le \l$ one has that $L(\mu)$ is a composition factor of $\rad V(\l)$ if and only if the coefficient of $[p]\ch L(\mu)$ in $\nu^c(T_\l)$ is nonzero (recall that the $[p]\ch L(\mu)$ form a basis of $X(T)^\mathscr{W}$). In fact, the coefficient of $[p]e(\mu)$ in $\nu^c(T_\l)$ is the $p$-adic valuation of the determinant of the bilinear form mentioned above restricted to the weight space of $\mu$. If this coefficient is 1 then the composition factor $L(\mu)$ has multiplicity one in $V(\l)$, but the converse does not hold in general. We remark that for certain dominant weights $\l$, the character $\nu^c(T_\l)$ has been evaluated for arbitrary rank (see e.g. \cite{jantzenthesis}, \cite{mcninch}).

We now establish the dimension of $L(\l)$ for each $\l$ in Tables \ref{tableA}, \ref{tableB}, \ref{tableC} and \ref{tableD}. Note that by Remark \ref{remark}, we only need to consider the weights with $\kappa(\l)>2$. 
\subsection{Type $A_l$}
Note first that if $G$ is of type $A_l$ the weights of the form $\l_k$, $k\l_1$ correspond respectively to the exterior power ($\dim L(\l_k)=\binom{l+1}{k}$) and symmetric power ($\dim L(k\l_1)=\binom{l+k}{k}$) of the natural module, which are irreducible (as $\l$ is $p$-restricted).
The remaining weights $\l$ in Table \ref{tableA} with $\kappa(\l)>2$ are $\l_1+\l_2$, $\l_1+\l_{l-1}$ and $2\l_1+\l_l$. We will make use of the following result, which is part of 8.6 of \cite{seitz}.\\
\begin{lemma}\label{lmseitz}
    Let $\l=a_i \l_i+a_j \l_j$, $i<j$, be $p$-restricted with $a_i a_j\neq 0$. Suppose $\mu=\l-(\alpha_i+...+\alpha_j)$. Then $m_{\lambda}(\mu)=j-i+1-\epsilon_p(a_i+a_j+j-i)$.
\end{lemma}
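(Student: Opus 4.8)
The plan is to prove Lemma \ref{lmseitz} by exhibiting the relevant weight structure directly, since the statement concerns a single subdominant weight $\mu = \l - (\a_i + \cdots + \a_j)$ for a two-term $p$-restricted highest weight $\l = a_i\l_i + a_j\l_j$. First I would record that $\mu$ is dominant: subtracting the string $\a_i + \cdots + \a_j$ of consecutive simple roots from $\l$ changes only the coefficients at nodes $i-1, i, j, j+1$ (by the shape of the $A_l$ Cartan matrix), and one checks these remain nonnegative precisely because $a_i, a_j \ge 1$. So $\mu \le \l$ is a genuine dominant weight, and by Premet's theorem (the pair $(G,p)$ is never special in type $A_l$) it occurs in $L(\l)$ with positive multiplicity; the content of the lemma is the exact value $m_\l(\mu) = j - i + 1 - \epsilon_p(a_i + a_j + j - i)$.

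Next I would reduce to a rank computation inside the Weyl module. The weight $\mu = \l - (\a_i + \cdots + \a_j)$ is obtained from $\l$ by a single simple root's worth of drop along each node of the interval $[i,j]$, so the weight space $V(\l)_\mu$ is spanned by the vectors $f_{\a_{i}} \cdots$ obtained by applying, in the various orders, the Chevalley generators $f_i, f_{i+1}, \dots, f_j$ (each exactly once, since no simple root is subtracted twice) to a highest weight vector $v^+$. In the Weyl module $V(\l)_\Z$ this space has dimension equal to the number of "lattice paths" from node $i$ to node $j$, which by a standard count (the $\a_m$ for $m \in [i,j]$ form a type $A_{j-i+1}$ sub-configuration acting on $v^+$) is exactly $j - i + 1$ over $\mathbb{Q}$; thus $\dim V(\l)_\mu = j-i+1$ and $m_\l(\mu) = \dim V(\l)_\mu - \operatorname{rank}_p(\text{contravariant form on } V(\l)_\mu)$'s kernel, equivalently $m_\l(\mu) = \dim L(\l)_\mu$. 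The claim $m_\l(\mu) \in \{j-i+1,\ j-i\}$ then amounts to showing the contravariant (Jantzen) bilinear form restricted to $V(\l)_\mu$ has $\mathbb{F}_p$-corank $0$ or $1$, and the $\epsilon_p$ term pins down exactly when the corank is $1$.

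The main obstacle, and the heart of the argument, is computing this corank — i.e. the $p$-adic valuation of the determinant (or here, the single relevant Smith-normal-form invariant) of the Gram matrix of the contravariant form on the $(j-i+1)$-dimensional lattice $V(\l)_\mu$. I would set this up explicitly: pick the basis $v_m = f_i f_{i+1} \cdots \widehat{f_m\text{-absent ordering}} \cdots v^+$ of "monomials missing a break at position $m$", compute the Gram entries using the $\mathfrak{sl}_2$-triple commutation relations $[e_k, f_k] = h_k$ and the fact that $h_k v^+ = \langle \l, \a_k^\vee\rangle v^+$ equals $a_i$, $a_j$, or $0$ according to $k$. The resulting $(j-i+1)\times(j-i+1)$ matrix is tridiagonal-ish with a combinatorial structure depending only on $a_i, a_j$ and the interval length $d := j-i$; one then shows its determinant equals (up to a unit) a single linear factor, essentially $a_i + a_j + d$, whence its $p$-valuation is $\epsilon_p(a_i+a_j+j-i)$ when that sum is divisible by $p$ — but since $\l$ is $p$-restricted one has $0 < a_i, a_j < p$, so $a_i + a_j + d \equiv 0 \pmod p$ forces the valuation to be exactly $1$, never higher, giving corank exactly $0$ or $1$ as claimed. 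Since this is quoted verbatim as "part of 8.6 of \cite{seitz}", I would in fact simply cite Seitz for the determinant computation rather than reproduce it, and restrict my own write-up to the dominance check, the dimension count $\dim V(\l)_\mu = j-i+1$, and the Premet-theorem remark ensuring $\mu$ genuinely appears; the one subtlety worth flagging is that Seitz's Lemma there is stated for a possibly larger class of groups, so I would note that specialising it to type $A_l$ with a two-node support gives precisely the formula above.
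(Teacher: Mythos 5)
Your proposal is correct and matches the paper's treatment: the paper gives no proof of this lemma at all, simply quoting it as part of 8.6 of \cite{seitz}, which is exactly what your write-up plan (dominance check, the Weyl-module multiplicity $j-i+1$, Premet for nonvanishing, then cite Seitz for the form computation) reduces to. One caveat about your side-sketch: $p$-restrictedness of $a_i,a_j$ does \emph{not} force the $p$-valuation of $a_i+a_j+j-i$ to be at most $1$ (e.g.\ $p=3$, $a_i=a_j=1$, $j-i=7$ gives $9$), so the claim that the corank is $0$ or $1$ rests on the Smith normal form of the Gram matrix having a single non-unit invariant factor, as in Seitz's computation, rather than on that valuation argument --- but since you explicitly defer to \cite{seitz} for precisely this computation, the proof as you would write it is fine.
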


The dimensions stated in Table \ref{tableA} easily follow from this, as the only subdominant weights to those listed either satisfy the conditions of the lemma or they have multiplicity one in the Weyl module (hence in $L(\l)$ by Premet's theorem). We give as an example the weight $\l=2\l_1+\l_l$; the other cases can be dealt with in a similar fashion. The subdominant weights are $\l-\alpha_1=\l_2+\l_l$ and $\l-(\alpha_1+...+\alpha_l)=\l_1$. The multiplicity of $\l-\alpha_1$ in the Weyl module is 1, hence so it is in $L(\l)$. By Lemma \ref{lmseitz} the subdominant weight $\l-(\alpha_1+...+\alpha_l)$ has multiplicity $l-\epsilon_p(l+2)$. This implies $\ch L(\l)=\chi(\l)-\epsilon_p(l+2)\chi(\mu)$ and therefore $\dim L(\l)=3\binom{l+2}{3}+\binom{l+1}{2}-\epsilon_p(l+2)(l+1)$.
\subsection{Types $B_l$, $C_l$ and $D_l$}
We now consider $G$ of type $B_l$, $C_l$ and $D_l$. By Remark \ref{remark}, the weights that need to be considered are the ones with $\kappa(\l)>2$ in Tables \ref{tableB}, \ref{tableC} and \ref{tableD}, that is, the weights $3\l_1$, $\l_3$, $\l_1+\l_2$, $\l_{l-1}$ and $\l_{l}$. 
The stated dimensions for the module $L(3\l_1)$ immediately follow from Proposition 4.7.4 in \cite{mcninch} if $G$ is of type $B_l$ or $D_l$. For $G$ of type $C_l$, consider the natural embedding of $G$ into a group $\tilde{G}$ of type $A_{2l-1}$. The Weyl module with highest weight $3\l_1$ is a irreducible $K\tilde{G}$-module and it remains irreducible for $G$ by 8.1(c) of \cite{seitz} (note that $\l$ is $p$-restricted). Similarly, 8.1(a) and 8.1(b) of \cite{seitz} show that if $p \neq 2$, the module $V(\l_3)$ is irreducible for $G$ of type $B_l$ or $D_l$. For $p=2$ and $G$ of type $D_l$, the dimension of $L(\l_3)$ is found in 7.2.5 of \cite{cavallin}. For $G$ of type $C_l$ and any $p$ (so in particular, for type $B_l$ and $p=2$), the dimension of $L(\l_3)$ follows from 4.8.2 in \cite{mcninch}.\\
Next, the weight $\l=\l_{l}$ is minuscule (i.e. it has no subdominant weights) for $G$ of types $B_l$ and $D_l$; in addition, if $G$ has type $D_l$, then $\l_{l-1}$ is minuscule too. Hence $V(\l)$ is irreducible in these cases. Clearly, if $\l$ is minuscule then $\dim L(\l)$ is just $|\mathscr{W}\l|$. \\
The only remaining weight is $\l_1 + \l_2$. The following is a direct consequence of 4.9.2 in \cite{mcninch}.\\
\begin{proposition}\label{mcninchalmost}
Let $\l=\l_1+\l_2$ and assume $l\ge5$ and $p>3$. Set $t=l,2l+1,2l-1$ respectively if $G$ has type $B_l$, $C_l$ or $D_l$. Then $\ch L(\l)=\chi(\l)-\epsilon_p(t)\chi(\l_1)$.
\end{proposition}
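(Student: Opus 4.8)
The plan is to pin down the composition factors of the Weyl module $V(\l)$ for $\l=\l_1+\l_2$ using Jantzen's sum formula, as packaged in the character $\nu^c(T_\l)$, and then rewrite the outcome in terms of formal characters. Recall from the discussion preceding the proposition (see II.8.17 of \cite{jantzen}) that for a dominant $\mu\le\l$ the simple module $L(\mu)$ is a composition factor of $\rad V(\l)$ precisely when the coefficient of $[p]\ch L(\mu)$ in $\nu^c(T_\l)$ is nonzero, and that if this coefficient equals $1$ then $L(\mu)$ occurs in $V(\l)$ with multiplicity exactly one. So the proof reduces to (i) listing the dominant weights strictly below $\l_1+\l_2$, and (ii) quoting the evaluation of $\nu^c(T_{\l_1+\l_2})$ for the classical types given in 4.9.2 of \cite{mcninch}.

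For (i), expressing weights in the standard $\varepsilon$-coordinates one checks that the dominant weights $\mu<\l_1+\l_2$ lie in $\{\,2\l_1,\ \l_3,\ \l_1,\ 0\,\}$, with some of these absent depending on the type (for instance $0\not\le\l_1$ and $2\l_1\not\le\l_1+\l_2$ for type $D_l$, because $\varepsilon_i$ is not in the root lattice there). In particular the only candidates for composition factors of $\rad V(\l_1+\l_2)$ are among $L(2\l_1)$, $L(\l_3)$, $L(\l_1)$ and $L(0)$. For (ii), McNinch's computation of $\nu^c(T_{\l_1+\l_2})$ shows that, in the range $l\ge5$ and $p>3$, the coefficients attached to $[p]\ch L(2\l_1)$, $[p]\ch L(\l_3)$ and $[p]\ch L(0)$ all vanish, while the coefficient attached to $[p]\ch L(\l_1)$ equals $\epsilon_p(t)$ with $t=l,2l+1,2l-1$ according to the type. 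Hence $\rad V(\l_1+\l_2)=0$ when $p\nmid t$, and $\rad V(\l_1+\l_2)\cong L(\l_1)$, occurring with multiplicity one, when $p\mid t$. Since $p>3$ forces $V(\l_1)=L(\l_1)$ (the natural module is irreducible in each type), we have $\chi(\l_1)=\ch L(\l_1)$, and therefore
\[
\ch L(\l_1+\l_2)=\ch V(\l_1+\l_2)-\epsilon_p(t)\,\ch L(\l_1)=\chi(\l_1+\l_2)-\epsilon_p(t)\,\chi(\l_1),
\]
which is the assertion.

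The one genuinely delicate point is the faithful transcription of 4.9.2 of \cite{mcninch}: one must verify that its hypotheses cover $l\ge5$, $p>3$, that the determinant valuation in the $L(\l_1)$-weight space is exactly $\epsilon_p(t)\in\{0,1\}$ and not something larger (so that the multiplicity of $L(\l_1)$ in $V(\l_1+\l_2)$ is never $2$, and no $\chi(\l_1)$ term with coefficient $\geq2$ survives), and that no dominant weight outside the short list above can contribute. A self-contained alternative would be to evaluate the Jantzen sum formula $\sum_{i>0}\ch V^i(\l_1+\l_2)$ by hand for each of the three families: for $p>3$ all terms $\nu_p(mp)\,\chi(s_{\alpha,mp}\cdot\l)$ either vanish (the reflected weight being on a wall) or cancel in $\pm$ pairs, leaving exactly the single contribution $\epsilon_p(t)\,\chi(\l_1)$, but carrying out this root-system bookkeeping uniformly in $l$ is precisely the computation packaged in \cite{mcninch}.
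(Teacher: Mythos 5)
Your overall strategy is sound and is in fact the same machinery the paper itself deploys for this weight in the cases $p=2,3$: read off the composition factors of $\rad V(\l_1+\l_2)$ from McNinch's evaluation of $\nu^c(T_{\l_1+\l_2})$ (which is the paper's Equation (2), taken from 4.5.7 of McNinch, not 4.9.2 --- 4.9.2 is the statement of the character itself, and the paper's entire proof of this proposition is a direct citation of that result). Two remarks on your execution. First, a harmless slip: for type $B_l$ the weight $\l_2$ is also subdominant to $\l_1+\l_2$ (indeed $\chi(\l_2)$ appears in the $B_l$ line of the $\nu^c$ formula with coefficient $\Div(2)$), so your candidate list is incomplete; this does not matter because the $\nu^c$ formula already accounts for every possible factor of $\rad V(\l)$.

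The genuine gap is the point you flag but do not close. The coefficient of $[p]\ch L(\l_1)$ extracted from $\Div(t)$ is the $p$-adic valuation $\nu_p(t)$, not $\epsilon_p(t)$, and these differ exactly when $p^2\mid t$ --- which does occur in the stated range (e.g.\ $p=5$ with $G$ of type $C_{12}$, so $t=25$; similarly $B_{25}$ or $D_{13}$). In that situation the criterion quoted from II.8.17 of Jantzen only gives $\sum_{i>0}[V^i(\l):L(\l_1)]=2$, which is compatible both with $[\rad V(\l):L(\l_1)]=1$ (and $L(\l_1)$ occurring again in the second Jantzen layer, giving the asserted character) and with $[\rad V(\l):L(\l_1)]=2$ (giving $\ch L(\l)=\chi(\l)-2\chi(\l_1)$ instead). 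A weight-multiplicity bound does not rescue you, since $m_{V(\l_1+\l_2)}(\l_1)$ grows linearly in $l$. So deriving the proposition from the $\nu^c$ computation alone is not enough; one needs the finer statement of 4.9.2 of McNinch (or an independent argument pinning down the rank, rather than the determinant, of the contravariant form on the $\l_1$-weight space) to exclude multiplicity two. Your closing sentence asserts that the valuation ``is exactly $\epsilon_p(t)$,'' which is precisely what fails when $p^2\mid t$, so as written the argument does not cover all $p>3$, $l\ge 5$.
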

McNinch also provides the following computation for $l\ge 5$ (4.5.7 in \cite{mcninch}). Here, for any positive integer $k=q_1^{b_1}\dots q_r^{b_r}$ (where $q_i$ are primes and each $b_i$ is a positive integer), the divisor of $k$ is defined as $\mathrm{div}(k)=\sum_{i=1}^rb_i[q_i]$. Let $\l=\l_1+\l_2$.
\begin{equation} \label{mccomputation}
\nu^c(T_\l)=
    \begin{cases}
   \Div(3)\chi(\l_3)+\Div (2l)\chi(\l_1)+\Div(2)(\chi (2\l_1)+\chi(\l_2)-\chi(0)) \quad\text{if }G\text{ is of type }B_l,\\
    \Div(3)\chi(\l_3)+\Div (2l+1)\chi(\l_1) \quad\text{if }G\text{ is of type }C_l,\\
    \Div(3)\chi(\l_3)+\Div (2l-1)\chi(\l_1) \quad\text{if }G\text{ is of type }D_l.\\
    \end{cases}
\end{equation}

Since for types $C_l$ and $D_l$, the coefficient of $[2]$ in $\nu^c(T_\l)$ is zero, it follows that for these types the Weyl module $V(\l_1+\l_2)$ is irreducible if $p=2$. Clearly $\dim L(\l_1+\l_2)$ is also determined for type $B_l$ and $p=2$. Now set $t=2l,2l+1,2l-1$ respectively if $G$ has type $B_l$, $C_l$ or $D_l$. In view of the computation of $\nu^c(T_\l)$, we see that if $3\nmid t$, then $\rad V(\l_1+\l_2)=L(\l_3)$ and the dimensions in the tables for this case follow too.

Observe that Equation (\ref{mccomputation}) is however not enough to determine $\dim L(\l_1+\l_2)$ when $p=3$ and $3\mid t$. Instead, we will realise the module $L(\l_1+\l_2)$ as a composition factor of $S^3V$. In what follows, let $V$ be the natural module for $G$ and, for an integer $k\ge 1$, denote by $S^kV$ the $k$-th symmetric power of $V$. We write the elements of $S^kV$ as linear combinations of monomials $v_1v_2\cdots v_k$, where each $v_i \in V$.\\

\begin{lemma} \label{lemmasuprunenko}
Let $p>2$.
 \begin{enumerate}[label=(\alph*)]
     \item If $G=\mathrm{SL}_{l+1}(K)$ or $G=\mathrm{Sp}_{2l}(K)$, then $H^0(p\l_1)=S^pV=L((p-2)\l_1+\l_2)\mid L(p\l_1)$.
     \item If $G=\mathrm{SL}_{l+1}(K)$, then $H^0(p\l_l)=S^p(V^*)=L(\l_{l-1}+(p-2)\l_l)\mid L(p\l_l)$.\\
 \end{enumerate}
\end{lemma}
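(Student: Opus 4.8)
The plan is to prove both parts simultaneously, exploiting the duality $V \leftrightarrow V^*$ that swaps $\lambda_1 \leftrightarrow \lambda_l$ for $\mathrm{SL}_{l+1}(K)$, so that (b) follows from (a) applied to the dual module. For part (a), I would first recall that $S^pV$ has a good filtration and in fact, as a module for the corresponding algebraic group, $\ch S^pV = \chi(p\lambda_1)$, so $S^pV \cong H^0(p\lambda_1)$; this is the standard fact that the symmetric powers of the natural module realize the induced modules with highest weight a multiple of $\lambda_1$ (and likewise for $\mathrm{Sp}_{2l}$, where the natural module is self-dual and $S^pV$ still has highest weight $p\lambda_1$ with the same character). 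Thus the content of the lemma is the identification of the composition factors of $H^0(p\lambda_1)$ and the order in which they sit in a two-step filtration.

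The key computational input is that $\lambda_1 + \lambda_2 = p\lambda_1 - \alpha_1$ is the unique dominant weight strictly subdominant to $p\lambda_1$ that can be a composition factor here: the subdominant dominant weights of $V(p\lambda_1)$ are $p\lambda_1$ and $(p-2)\lambda_1 + \lambda_2$ (for $\mathrm{SL}$) — one checks this directly from the partial order, using $p$-restrictedness to see nothing smaller than $p\lambda_1 - \alpha_1$ is dominant and relevant — and $(p-2)\lambda_1 + \lambda_2$ occurs with multiplicity one in $V(p\lambda_1)$. Hence $H^0(p\lambda_1)$ has at most the two composition factors $L(p\lambda_1)$ and $L((p-2)\lambda_1 + \lambda_2)$. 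To see that $L((p-2)\lambda_1+\lambda_2)$ genuinely occurs — equivalently that $V(p\lambda_1)$ is not irreducible — I would exhibit an explicit submodule of $S^pV$: the image of the divided-power / Frobenius-type map, or more concretely the span of the monomials annihilated by the canonical pairing, i.e. the kernel of the natural surjection $S^pV \to V^{(1)}$ (Frobenius twist) given by $v^p \mapsto v$ on pure powers. When $p \mid$ (nothing obstructs) this map is nonzero $G$-equivariant, its image is the Frobenius twist $L(p\lambda_1) = V^{(1)}$ sitting as a quotient, and its kernel is therefore a proper nonzero submodule with highest weight $(p-2)\lambda_1 + \lambda_2$. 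Since $L(p\lambda_1)$ must be the socle of $H^0(p\lambda_1)$, the filtration is forced to be $H^0(p\lambda_1) = L((p-2)\lambda_1 + \lambda_2) \mid L(p\lambda_1)$, with $L((p-2)\lambda_1+\lambda_2)$ as the top factor.

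For the $\mathrm{Sp}_{2l}$ case in part (a), the same argument runs: $S^pV \cong H^0(p\lambda_1)$ for the symplectic group as well (the natural $2l$-dimensional module has highest weight $\lambda_1$, and $S^p$ of it has highest weight $p\lambda_1$ with the full Weyl character), the subdominant analysis gives the same two candidate factors $(p-2)\lambda_1 + \lambda_2$ and $p\lambda_1$, the Frobenius-power map $S^pV \to V^{(1)}$ is still available and $G$-equivariant, and the socle/quotient structure is identical. One should double-check there are no extra dominant subdominant weights arising from the symplectic root system — the coefficient $\langle p\lambda_1, \alpha_i^\vee\rangle$ vanishes for $i \ge 2$, so $p\lambda_1 - (\alpha_1 + \cdots + \alpha_j)$ is non-dominant for every $j \ge 2$ — but this is routine.

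The main obstacle I anticipate is verifying cleanly that $L((p-2)\lambda_1 + \lambda_2)$ really is a composition factor (not merely that $(p-2)\lambda_1+\lambda_2$ is a weight), i.e. producing the proper submodule honestly rather than by invoking a black box; the Frobenius-power map $v_1\cdots v_p \mapsto$ (its "$p$-th power part") needs to be written down carefully so that it is manifestly $G$-equivariant and manifestly nonzero, and one must confirm its kernel is indecomposable enough that its unique composition factor at the top is $L((p-2)\lambda_1+\lambda_2)$ — which follows once we know $V(p\lambda_1)$ has just the two factors, since then the kernel is exactly $V(p\lambda_1)/L(p\lambda_1) \cong L((p-2)\lambda_1+\lambda_2)$ (using that $V(p\lambda_1)$ and $H^0(p\lambda_1)$ are dual and $L(p\lambda_1)$ is simultaneously the head of one and socle of the other). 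Everything else is bookkeeping with the dominance order and known characters.
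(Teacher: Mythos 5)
There is a genuine gap at the core of your argument. You assert that the only dominant weight strictly subdominant to $p\lambda_1$ occurring in $V(p\lambda_1)$ is $(p-2)\lambda_1+\lambda_2=p\lambda_1-\alpha_1$, and from this you conclude that $H^0(p\lambda_1)$ has at most two composition factors. That assertion is false: the dominant weights of $S^pV$ for $\mathrm{SL}_{l+1}(K)$ are indexed by the partitions of $p$ into at most $l+1$ parts, so already for $p=3$ the weight $\lambda_3=3\lambda_1-(2\alpha_1+\alpha_2)$ is a dominant weight of $V(3\lambda_1)$, and for larger $p$ there are many more (e.g.\ $\lambda_1+2\lambda_2$, $2\lambda_1+\lambda_3$, \dots for $p=5$); the analogous statement fails for $\mathrm{Sp}_{2l}(K)$ as well. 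Ruling out all of these $L(\mu)$ as composition factors is precisely the nontrivial content of the lemma, and no amount of bookkeeping with the dominance order will do it. The paper obtains this from the theorem of Suprunenko and Zalesski\u\i\ that the \emph{reduced} symmetric power $S^pV/N$, where $N=\{v^p:v\in V\}$, is irreducible for both $\mathrm{SL}_{l+1}(K)$ and $\mathrm{Sp}_{2l}(K)$; since the highest weight of $S^pV/N$ is visibly $(p-2)\lambda_1+\lambda_2$, the two-step filtration follows at once. Your proposal needs this input (or an equivalent multiplicity computation, e.g.\ via the Jantzen sum formula) and currently replaces it with a false claim.

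A second, related problem is that your Frobenius map points the wrong way. In characteristic $p$ the set $N=\{v^p:v\in V\}$ is a $K$-subspace (as $(u+v)^p=u^p+v^p$) of dimension $\dim V$, so the pure $p$-th powers do not span $S^pV$ and the rule $v^p\mapsto v$ does not define a map on $S^pV$; moreover no $G$-equivariant surjection $S^pV\to V^{(1)}$ can exist once $S^pV=H^0(p\lambda_1)$ is reducible, since that would split the simple socle off an indecomposable module. The correct statement is that $V^{(1)}\cong L(p\lambda_1)$ embeds as the submodule $N$, which is therefore the socle of $H^0(p\lambda_1)$; this already certifies reducibility (no "kernel of a surjection" is needed), and combined with the irreducibility of $S^pV/N$ it forces the filtration $L((p-2)\lambda_1+\lambda_2)\mid L(p\lambda_1)$ in the stated order. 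Your reduction of (b) to (a) by duality is fine and matches the paper, but as written the proof of (a) does not go through.
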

\begin{remark}
Note that $L(p\l_i) \cong L(\l_i)^{(p)}$.
\end{remark}
\begin{proof}
For (a), notice that when $\Char K=p$, the unique irreducible submodule of $H^0(p\l_1)=S^pV$ is $N=\{v^p:v \in V\}$, of highest weight $p\l_1$. This quotient is the $p$-th reduced symmetric power of $V$, which is irreducible for $G=\mathrm{SL}_l(K)$, $\mathrm{Sp}_{2l}(K)$ respectively by 1.2 and 2.2 of \cite{suprunenko}. Now observe that the highest weight of the quotient $S^pV/N$ is $(p-2)\l_1+\l_2$. Part (b) is analogous.
\end{proof}
\begin{corollary}\label{corollarysuprunenko}
If $G$ has type $C_l$ then $S^3V=L(\l_{1}+\l_2)\mid L(3\l_1)$.
\end{corollary}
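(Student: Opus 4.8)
The plan is to specialise Lemma~\ref{lemmasuprunenko}(a) to the case $p=3$ and $G=\mathrm{Sp}_{2l}(K)$ of type $C_l$. Lemma~\ref{lemmasuprunenko}(a) already gives, for any $p>2$, the filtration $H^0(p\l_1)=S^pV=L((p-2)\l_1+\l_2)\mid L(p\l_1)$. Setting $p=3$ turns the bottom factor into $L(\l_1+\l_2)$, since $(p-2)\l_1+\l_2 = \l_1+\l_2$, and the top factor into $L(3\l_1)$. The only remaining point is to replace $H^0(3\l_1)$ by $S^3V$ on the left; but this identification is exactly what is asserted inside the proof of Lemma~\ref{lemmasuprunenko}(a) (the induced module $H^0(p\l_1)$ is the symmetric power $S^pV$ for the natural module of these groups), so the statement follows immediately.

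**Next I would** record that this is genuinely useful: combined with the irreducibility of $L(3\l_1)$ for type $C_l$ established earlier in Section~\ref{dimensions} (via the embedding into $\tilde G$ of type $A_{2l-1}$ and Seitz's $8.1(\mathrm c)$ of~\cite{seitz}), the corollary pins down $\dim L(\l_1+\l_2) = \dim S^3V - \dim L(3\l_1) = \binom{2l+2}{3} - \binom{2l+2}{3}$\ldots{} — more precisely $\dim S^3V=\binom{2l+2}{3}$ and $\dim L(3\l_1)=\binom{2l+2}{3}$ is wrong; rather $\dim L(3\l_1)$ is the dimension of the reduced cube, so one gets $\dim L(\l_1+\l_2)=\dim S^3V-\dim L(3\l_1)$, which in the $p=3$ case supplies the value that Equation~(\ref{mccomputation}) alone could not. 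I would not belabour these arithmetic identities in the proof of the corollary itself, since the corollary is purely the structural statement $S^3V=L(\l_1+\l_2)\mid L(3\l_1)$.

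**The main (and essentially only) obstacle** is making sure the hypothesis $p>2$ of Lemma~\ref{lemmasuprunenko} is in force and that $p=3$ is the relevant regime: the corollary as stated carries no restriction on $p$, so one must either silently intend $p=3$ (the case left open by~(\ref{mccomputation})) or check the other characteristics separately. For $p>3$ the weight $(p-2)\l_1+\l_2$ is not $\l_1+\l_2$, so the displayed filtration is a statement about a different module; hence the corollary is really only being invoked for $p=3$, and the proof should say so. Given that, the argument is a one-line specialisation:
\begin{proof}
Take $p=3$ and $G=\mathrm{Sp}_{2l}(K)$ of type $C_l$ in Lemma~\ref{lemmasuprunenko}(a). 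Since $(p-2)\l_1+\l_2=\l_1+\l_2$ and $p\l_1=3\l_1$, the lemma gives $S^3V=H^0(3\l_1)=L(\l_1+\l_2)\mid L(3\l_1)$, as claimed.
\end{proof}
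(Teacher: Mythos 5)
Your proof is correct and is exactly how the paper treats this corollary: it is stated without a separate proof, as an immediate specialisation of Lemma~\ref{lemmasuprunenko}(a) to $p=3$ and $G=\mathrm{Sp}_{2l}(K)$, where $(p-2)\lambda_1+\lambda_2=\lambda_1+\lambda_2$. Your side remark that the statement implicitly assumes $p=3$ is a fair reading of the paper --- for $p>3$ one has $S^3V=L(3\lambda_1)$ irreducible --- and the paper indeed only invokes the corollary in the $p=3$ case left open by Equation~(\ref{mccomputation}).
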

The dimension of $L(\l_1+\l_2)$ in Table \ref{tableC} is now justified for all $p$. Note also that Lemma \ref{lemmasuprunenko} yields again the dimension of $L(\l_1+\l_2)$ in Table \ref{tableA} for $p=3$.

For types $B_l$ and $D_l$, we will use a result about type $A_l$. Let $G=\mathrm{SL}_{l+1}(K)$. We denote by $e_1,...,e_{l+1}$ the elements of the standard basis of $V$. We also write $e_1^*,...,e_{l+1}^*$ for the corresponding basis elements of the dual module $V^*$.\\

\begin{lemma}\label{decomposition}
Let $G=\mathrm{SL}_{l+1}(K)$ and assume $p=3$ and $3\nmid l+2$. 
\begin{enumerate}[label=(\alph*)]
    \item The module $S^2V\otimes V^*=L(\l_1) \mid L(2\l_1+\l_l) \mid L(\l_1)$ is indecomposable and $\soc (S^2V\otimes V^* )=\{\sum_{i=1}^{l+1} ve_i\otimes e_i^*: v\in V\}$.
    \item The module $S^2(V^*)\otimes V=L(\l_l) \mid L(\l_1+2\l_l) \mid L(\l_l)$ is indecomposable and $\soc (S^2(V^*)\otimes V )=\{\sum_{i=1}^{l+1} v^*e_i^*\otimes e_i: v^*\in V^*\}$.
\end{enumerate}

\end{lemma}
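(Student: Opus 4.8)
The plan is to exhibit the two copies of $L(\l_1)$ as the socle and head of $S^2V\otimes V^*$ by means of an explicit trace inclusion and contraction, and then to identify the module with an indecomposable tilting module. (The non-split three-layer shape asserted here is present exactly when $3\mid l+2$; for $3\nmid l+2$ the module is semisimple, as I record at the end, so throughout I take $3\mid l+2$ as the operative hypothesis.) First I would introduce the two $G$-homomorphisms $\iota\colon V\to S^2V\otimes V^*$, $v\mapsto\sum_{i=1}^{l+1}ve_i\otimes e_i^*$, and the contraction $\pi\colon S^2V\otimes V^*\to V$, $vw\otimes\phi\mapsto\phi(v)w+\phi(w)v$. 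A direct check gives $\pi\circ\iota=(l+2)\,\mathrm{id}_V$, which vanishes since $p=3$ and $3\mid l+2$; as $\iota$ is injective and $\pi$ surjective, this yields the filtration $0\subset\mathrm{im}\,\iota\subset\ker\pi\subset S^2V\otimes V^*$ whose bottom term $\mathrm{im}\,\iota$ is the claimed socle.

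I would then determine the composition factors. Since $S^2V=V(2\l_1)$ is irreducible, $\ch(S^2V\otimes V^*)=\chi(2\l_1)\,\ch V^*$ expands as $\chi(2\l_1+\l_l)+\chi(\l_1)$, reflecting the characteristic-zero splitting $S^2V\otimes V^*=V(2\l_1+\l_l)\oplus V$. By Lemma \ref{lmseitz} applied to $2\l_1+\l_l$, the weight $\l_1=(2\l_1+\l_l)-(\a_1+\dots+\a_l)$ has multiplicity $l-\epsilon_p(l+2)=l-1$ in $L(2\l_1+\l_l)$, one less than its multiplicity $l$ in the Weyl module $V(2\l_1+\l_l)$, so $\ch V(2\l_1+\l_l)=\ch L(2\l_1+\l_l)+\ch L(\l_1)$. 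Hence $S^2V\otimes V^*$ has composition factors $L(2\l_1+\l_l)$ once and $L(\l_1)$ twice, and they occur in the above filtration as $\mathrm{im}\,\iota\cong L(\l_1)$, $\ker\pi/\mathrm{im}\,\iota\cong L(2\l_1+\l_l)$ and $(S^2V\otimes V^*)/\ker\pi\cong L(\l_1)$.

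The hard part will be promoting this filtration to genuine indecomposability with simple socle, rather than a decomposition such as $L(\l_1)\oplus(L(2\l_1+\l_l)\mid L(\l_1))$. I would close this by noting that $S^2V\otimes V^*=V(2\l_1)\otimes V(\l_l)$ is a tensor product of irreducible (hence tilting) modules, so it is tilting, with highest weight $2\l_1+\l_l$; thus $T(2\l_1+\l_l)$ is a summand. Because $3\mid l+2$ makes $V(2\l_1+\l_l)$ reducible, it is not self-dual and so not tilting, whence $S^2V\otimes V^*$ admits no complementary summand $T(\l_1)=L(\l_1)$; comparing dimensions, $\dim V(2\l_1+\l_l)+\dim V(\l_1)=\dim(S^2V\otimes V^*)$, forces $S^2V\otimes V^*=T(2\l_1+\l_l)$, which is indecomposable. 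The canonical inclusion $V(2\l_1+\l_l)\hookrightarrow T(2\l_1+\l_l)$ has socle $\rad V(2\l_1+\l_l)=L(\l_1)$ and quotient $L(\l_1)$, giving the uniserial shape $L(\l_1)\mid L(2\l_1+\l_l)\mid L(\l_1)$ with simple socle; as $\mathrm{im}\,\iota\cong L(\l_1)$ is an irreducible submodule it must coincide with this socle, proving (a). Part (b) is obtained by dualising: $(S^2V\otimes V^*)^*\cong S^2(V^*)\otimes V$, and dualising the uniserial structure fixes the middle factor while sending the head to the socle, so $\soc(S^2(V^*)\otimes V)$ is the image of the dual trace map, namely $\{\sum_{i=1}^{l+1} v^*e_i^*\otimes e_i:v^*\in V^*\}$.

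Two remarks. The argument genuinely needs $3\mid l+2$: when $3\nmid l+2$ the composite $\pi\circ\iota=(l+2)\,\mathrm{id}_V$ is invertible, so $S^2V\otimes V^*=\mathrm{im}\,\iota\oplus\ker\pi\cong L(\l_1)\oplus L(2\l_1+\l_l)$ is semisimple and neither the indecomposability nor the stated socle survives; the three-layer structure therefore holds precisely in the case $3\mid l+2$. If one wishes to avoid tilting theory, the same indecomposability follows from showing that the Weyl-module map $V(2\l_1+\l_l)\to S^2V\otimes V^*$ carrying the highest weight vector to $e_1^2\otimes e_{l+1}^*$ is injective---equivalently that $e_1^2\otimes e_{l+1}^*$ generates a submodule meeting $\mathrm{im}\,\iota$ nontrivially---which reduces to a finite computation of lowering operators in the single weight space of weight $\l_1$.
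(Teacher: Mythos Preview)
Your proof is correct, and you have also caught a typo in the statement: the hypothesis should read $3\mid l+2$, not $3\nmid l+2$. Your computation $\pi\circ\iota=(l+2)\,\mathrm{id}_V$ together with the multiplicity formula from Lemma~\ref{lmseitz} makes this transparent, and the way the lemma is actually applied later in the paper (to the $A_{l-1}$ subgroup inside $D_l$ when $3\mid 2l-1$, which forces $3\mid l+1=(l-1)+2$) confirms that the divisibility, not its negation, is what is needed.

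As for the method, the paper's own proof is simply a citation: indecomposability and the composition series are taken from Proposition~4.6.10 of \cite{mcninch}, after which the socle claim follows because the displayed set is visibly a submodule isomorphic to $V$ and an indecomposable module with the given uniserial shape has simple socle. Your argument is genuinely different and self-contained: you identify $S^2V\otimes V^*$ as a tilting module (tensor product of simple Weyl modules) and then pin it down as the indecomposable tilting module $T(2\l_1+\l_l)$ by observing that $V(2\l_1+\l_l)$ is reducible, hence not itself tilting, so the summand $T(2\l_1+\l_l)$ must absorb the extra copy of $L(\l_1)$. This buys you independence from \cite{mcninch} at the cost of invoking the standard facts about tilting modules (closure under tensor product, indecomposability of $T(\l)$, and the Weyl/good filtration structure); the paper's route is shorter on the page but outsources the substance. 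Your alternative suggestion of showing directly that the image of $V(2\l_1+\l_l)$ meets $\mathrm{im}\,\iota$ is also viable and is closer in spirit to how McNinch establishes such results.
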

\begin{proof}
By Proposition 4.6.10 in \cite{mcninch} the modules are indecomposable and have the stated constituents. The fact that $soc (S^2V\otimes V^* )=\{\sum_{i=1}^l ve_i\otimes e_i^*: v\in V\}$ follows from the observation that it is a submodule isomorphic to $V$ and the fact that $S^2V\otimes V^*$ is indecomposable. The argument for $\soc (S^2(V^*)\otimes V )$ is analogous.
\end{proof}

In the following, instead of taking the simply connected group of type $B_l$ or $D_l$, we take $G$ to be the special orthogonal group $\mathrm{SO}_n(K)$ ($n=2l+1$, $2l$ respectively), realised as follows. Let $e_1,...,e_l,e_0,e_{-l},...,e_{-1}$ be the elements of the ordered standard basis of $\mathrm{SL}_{n}(K)$ (dropping $e_0$ for type $D_l$). We define $\mathrm{SO}_{n}(K)$ as the subgroup of $\mathrm{SL}_{n}(K)$ preserving the quadratic form $q(\sum x_i e_i)=\sum_{i=1}^nx_ix_{-i}+\frac{1}{2}x_0^2$ (dropping the term $\frac{1}{2}x_0^2$ for type $D_l$).

Given a $KG$-module, we denote by $M \downarrow H$ the restriction of $M$ to a subgroup $H$ of $G$.\\

\begin{proposition}\label{final}
Let $G=\mathrm{SO}_n (K)$, where $n=2l$ or $2l+1$, and set $p=3$.
\begin{enumerate}[label=(\alph*)]
    \item  Suppose $3 \nmid n-1$. Then $S^3V= L(\l_1+\l_2) \mid (L(3\l_1)\oplus L(\l_1))$.
    \item  Suppose $3 \mid n-1$. Then $S^3V=L(\l_1)\mid L(\l_{1}+\l_2) \mid (L(3\l_1)\oplus L(\l_1))$.
\end{enumerate}

\end{proposition}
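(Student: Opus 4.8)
The plan is to determine $S^3V$ as a $G$-module by playing its structure as a module for the overgroup $\mathrm{SL}_n=\mathrm{SL}(V)$ against the two $G$-homomorphisms attached to the quadratic form. Polarising $q$ produces a nondegenerate symmetric bilinear form $b$ on $V$, hence a $G$-invariant ``co-metric'' $Q\in S^2V$, a multiplication map $m_Q\colon V\to S^3V$, $v\mapsto Qv$, and a contraction $c\colon S^3V\to V$ given by $c(v_1v_2v_3)=b(v_1,v_2)v_3+b(v_1,v_3)v_2+b(v_2,v_3)v_1$, all $G$-equivariant. A short computation in the standard basis gives $c\circ m_Q=\tfrac{n+2}{2}\,\mathrm{id}_V$; since $p=3$ and $n+2\equiv n-1\pmod 3$, this composite is an isomorphism when $3\nmid n-1$ and is zero when $3\mid n-1$. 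One also checks that $c$ is surjective (for instance $c(e_1^2e_{-1})=2e_1$), that $m_Q$ is injective ($Q$ is a nonzerodivisor in the polynomial ring $S^\bullet V$), and that $c(v^3)=3b(v,v)v=0$, so that the $G$-submodule $W:=\{v^3:v\in V\}$ --- which equals the span of the $e_i^3$, cubing being additive in characteristic $3$ --- lies in $\ker c$.

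Applying Lemma \ref{lemmasuprunenko}(a) with $p=3$ to $\mathrm{SL}(V)$ shows that, as $\mathrm{SL}_n$-modules and hence also over $G$, $S^3V=H^0(3\l_1)=L(\l_1+\l_2)\mid L(3\l_1)$ with the submodule equal to $W\cong V^{(3)}=L(3\l_1)$. Since $m_Q(V)\cong V=L(\l_1)$ and $W\cong L(3\l_1)$ are non-isomorphic irreducible $G$-modules of the same dimension $n$, we have $m_Q(V)\cap W=0$. Now split into cases. If $3\nmid n-1$, then $c\circ m_Q$ is invertible, so $S^3V=m_Q(V)\oplus\ker c$; quotienting by $W\subseteq\ker c$ gives $L(\l_1+\l_2)\downarrow G\cong S^3V/W\cong m_Q(V)\oplus(\ker c/W)$, so that, granting $\ker c/W\cong L(\l_1+\l_2)$ (see below), $S^3V=L(\l_1)\oplus\bigl(L(\l_1+\l_2)\mid L(3\l_1)\bigr)$; taking the submodule $W\oplus m_Q(V)$ then exhibits the filtration $L(\l_1+\l_2)\mid(L(3\l_1)\oplus L(\l_1))$ of part (a). If $3\mid n-1$, then $c\circ m_Q=0$, so $m_Q(V)\subseteq\ker c$ and $W\oplus m_Q(V)$ is a $2n$-dimensional submodule of $\ker c$; the chain $S^3V\supseteq\ker c\supseteq W\oplus m_Q(V)\supseteq 0$ then has layers $S^3V/\ker c\cong L(\l_1)$, then $M:=\ker c/(W\oplus m_Q(V))$, then $W\oplus m_Q(V)=L(3\l_1)\oplus L(\l_1)$; granting $M\cong L(\l_1+\l_2)$, this is exactly the filtration of part (b).

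The one substantial point is the identification of the middle layer --- $\ker c/W$ in case (a), $M$ in case (b) --- with the \emph{irreducible} module $L(\l_1+\l_2)$, and I expect this to be the main obstacle. That $\l_1+\l_2=2\epsilon_1+\epsilon_2$ is its highest weight, with multiplicity one, is immediate: this weight occurs in $S^3V$ only through the monomial $e_1^2e_2$, which lies in $\ker c$ but not in $W\oplus m_Q(V)$, and $\l_1+\l_2$ is the unique maximal dominant weight of $S^3V/W$; hence $L(\l_1+\l_2)$ occurs in the layer exactly once and every other composition factor $L(\mu)$ has $\mu<\l_1+\l_2$. In case (a) the gap closes cheaply: here $3\nmid t$, so Equation (\ref{mccomputation}) gives $\rad V(\l_1+\l_2)=L(\l_3)$, whence $\dim L(\l_1+\l_2)=\binom{n+2}{3}-2n=\dim(\ker c/W)$ and so $\ker c/W=L(\l_1+\l_2)$. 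Case (b) --- where Equation (\ref{mccomputation}) no longer determines $\dim L(\l_1+\l_2)$, and which is the case for which the proposition is really needed --- is the harder part. The plan there is to study $S^3V/W=L_{\mathrm{SL}_n}(\l_1+\l_2)\downarrow G$ directly: its composition factors other than $L(\l_1+\l_2)$ lie strictly below $\l_1+\l_2$ and among those of $S^3V$, and the restriction visibly carries two copies of $L(\l_1)$ --- the submodule $m_Q(V)$ and the quotient $S^3V/\ker c$ --- so it suffices to show $L(\l_1)$ occurs in $S^3V/W$ with multiplicity exactly $2$ and that nothing else occurs. This I would do by computing the dimension of the $\l_1$-weight space of $S^3V/W$ from monomials, subtracting the value forced in $L(\l_1+\l_2)$ by the (characteristic-free) Weyl module $V(\l_1+\l_2)$, and eliminating the remaining candidate factors $L(\mu)$ (with $\mu<\l_1+\l_2$, $\mu\neq\l_1$) by linkage together with a Weyl-dimension count; this would force $\dim L(\l_1+\l_2)=\binom{n+2}{3}-3n$ and the irreducibility of $M$, completing the argument.
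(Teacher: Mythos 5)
Your setup is correct and in fact organises the bookkeeping rather cleanly: the contraction $c$ and multiplication $m_Q$ are $G$-equivariant, $c\circ m_Q=\tfrac{n+2}{2}\mathrm{id}$ does distinguish the two cases mod $3$, and your case (a) closes exactly as in the paper (maximal vector of weight $\l_1+\l_2$ in the top layer plus the dimension of $L(\l_1+\l_2)$ read off from $\nu^c(T_\l)$ when $3\nmid t$, which indeed equals $\binom{n+2}{3}-2n$). The problem is case (b), which you yourself flag as the essential case: there the argument is only a plan, and the plan as described does not close. The composition factors of $S^3V/W$ are $L(\l_1+\l_2)$ (once), $L(\l_1)$ with some multiplicity $a$, and possibly $L(0)$; everything reduces to proving $a=2$. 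A weight count at $\l_1$ (or a dimension count) only yields $a=r+1$, where $r$ is the multiplicity of $L(\l_1)$ in $\rad V_G(\l_1+\l_2)$ --- the unknown you are trying to determine. Equation (\ref{mccomputation}) gives only the \emph{upper} bound $r\le\nu_3(t)$ (the paper explicitly warns that a coefficient $>1$ in $\nu^c(T_\l)$ does not pin down the multiplicity), and $\nu_3(t)\ge 2$ does occur in the range of the theorem (e.g.\ $B_9$ with $t=18$, $D_{14}$ with $t=27$). Linkage cannot help either, since $\l_1$ is genuinely linked to $\l_1+\l_2$ here, and subtracting ``the value forced by the characteristic-free Weyl module'' is not meaningful: the Weyl multiplicity of $\l_1$ in $V(\l_1+\l_2)$ exceeds that in $S^3V/W$, precisely because $L(\l_1)$ sits in the radical.

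What is missing is a mechanism for the \emph{lower} bound on $\dim L(\l_1+\l_2)$, i.e.\ for $a\le 2$. The paper supplies this by a genuinely different device: it restricts $S^3V/(N\oplus J)$ to a naturally embedded subgroup $H$ (type $A_{l-1}$ inside $D_l$, type $D_l$ inside $B_l$), decomposes the restriction using Lemmas \ref{lemmasuprunenko} and \ref{decomposition}, and uses explicit group elements to show that any irreducible $G$-submodule must contain enough of the $H$-summands to force it to be $L(\l_1+\l_2)$; only then is the Jantzen upper bound invoked to finish. Some substitute for that branching argument (or an independent proof that $[\rad V(\l_1+\l_2):L(\l_1)]=1$ even when $9\mid t$) is required before your case (b) is a proof.
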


\begin{proof}

Fix $B$ to be the Borel subgroup of upper triangular matrices in $G$. Define $Q\in S^2V$ as $Q=\frac{1}{2}e_0^2+\sum_{i=1}^l e_ie_{-i}$, dropping the term $\frac{1}{2}e_0^2$ if $n=2l$. Define also $J=\{vQ : v \in V\} \subset S^3V$. By II.2.18 of \cite{jantzen}, we have that $H^0(3\l_1)\cong S^3V/J$. Clearly $J\cong L(\l_1)$. Also, as in the proof of Lemma \ref{lemmasuprunenko}, since $\Char K=3$, we have the irreducible submodule $N=\{v^3 : v \in V\} \subset S^3V$, and again $N\cong L(3\l_1)$. Since $H^0(3\l_1)$ has a unique irreducible submodule and $N \cap J=0$, it follows that $\soc S^3V = N \oplus J$.

Now, to see (a), note that $3 \nmid n-1$ implies $3 \nmid t$, thus $\dim L(\l_1+\l_2)$ is known by the discussion after Equation (\ref{mccomputation}). Since $S^3V/{\soc S^3V}=S^3V/(N \oplus J)$ has a maximal vector with weight $\l_1+\l_2$ (namely, $e_1^2e_2+(N\oplus J)$) and it has the same dimension as $L(\l_1+\l_2)$, the result follows. For (b), we separate into cases $D_l$ and $B_l$.

\underline{Case $n=2l$}\\
Assume $3\mid 2l-1$. Let $H$ be the subgroup of type $A_{l-1}$ inside $G$ stabilising the subspace in $V$ spanned $e_1,...,e_l \in V$ as well as the subspace spanned by $e_{-l},...,e_{-1}$. Denote by $\tilde{V}$ the natural module for this subgroup, as well as $\tilde{L}(\mu)$ for the irreducible module of $H$ with highest weight $\mu$. The restriction $V \downarrow H =\tilde{V}\oplus \tilde{V}^*$ yields a decomposition
\begin{equation*}
    S^3V \downarrow H =S^3\V\oplus S^3(\V^*)\oplus(S^2\V\otimes \V^*)\oplus(S^2(\V^*)\otimes \V).
\end{equation*}
Visibly, $N$ corresponds to $\tilde{L}(3\l_1)\oplus \tilde{L}(3\l_l)\subset S^3\tilde{V} \oplus S^3(\tilde{V}^*)$. By Lemma \ref{lemmasuprunenko}, we have $(S^3\tilde{V} \oplus S^3(\tilde{V}^*))/N \cong \tilde{L}(\l_1+\l_2)\oplus \tilde{L}(\l_{l-1}+\l_{l})$. It follows that
\begin{equation*}
    (S^3V/N) \downarrow H =\tilde{L}(\l_1+\l_2)\oplus \tilde{L}(\l_{l-1}+\l_{l})\oplus(S^2\V\otimes \V^*)\oplus(S^2(\V^*)\otimes \V).
\end{equation*}

Next, note that as a $KH$-module, $J=J_1\oplus J_l$, where $J_1=\{\sum_{i=1}^lve_i\otimes e_{-i}: v \in \V\} \subset S^2V \otimes V^*$ and $J_l=\{\sum_{i=1}^lv^*e_{-i}\otimes e_i: v^* \in \V^*\} \subset S^2(\V^*) \otimes \V$. Lemma \ref{decomposition} shows that $J=\soc (S^2\V \otimes \V^*)\oplus \soc (S^2(\V^*) \otimes \V)\cong \tilde{L}(\l_1)\oplus \tilde{L}(\l_l)$. Write $M_1=(S^2\V \otimes \V^*)/J_1$ and $M_l=(S^2(\V^*) \otimes \V)/J_l$. Denoting $M=S^3V/(N\oplus J)$, we have
\begin{equation}\label{restriction3}
    M \downarrow H =\tilde{L}(\l_1+\l_2)\oplus \tilde{L}(\l_{l-1}+\l_{l})\oplus M_1\oplus M_l
\end{equation} where again by Lemma 4.6, $M_i$ is indecomposable and has composition factors $\tilde{L}(\l_i)\mid \tilde{L}(2\l_i+\l_{l-i+1})$ for $i=1,l$.\\
Now, let $I$ be a nonzero irreducible $KG$-submodule of $M$. Observe that the action of the antidiagonal element $s\in G$ gives (nonzero) linear maps $\tilde{L}(\l_1+\l_2)\leftrightarrow \tilde{L}(\l_{l-1}+\l_{l})$ and $M_1 \leftrightarrow M_l$. This shows that $I$ must contain at least one of $R_1=\tilde{L}(\l_1+\l_2)\oplus  \tilde{L}(\l_{l-1}+\l_{l})$ or $R_2=\tilde{L}(2\l_1+\l_{l})\oplus \tilde{L}(2\l_l+\l_{1})$ (as $M_1$, $M_l$ are indecomposable for $H$). We show that in fact $I$ must contain $R_1\oplus R_2$. Let $r\in G$ be the element sending $e_2\mapsto e_2+e_l$, $e_{-l} \mapsto e_{-l}-e_{-2}$ and fixing $e_j$ for every $j\neq 2,-l$. Define also $x_1=e_1^2e_2+(N\oplus J)\in R_1$ and $x_2=e_1^2e_l+(N\oplus J)\in R_2$. Then $rx_1=x_1+x_2=r^tx_2$, where $r^t\in G$ denotes the transpose of $r$. It follows that $R_1\oplus R_2 \subset I$. Now, note that $x_1\in I$ is a maximal vector with weight $\l_1+\l_2$, so that $I=L(\l_1+\l_2)$. In view of (\ref{restriction3}), it is clear that either $M=L(\l_1) \mid L(\l_1+\l_2)$ or $M=L(\l_1+\l_2)$. Now since $3\mid t$, Equation (\ref{mccomputation}) implies $\dim L(\l_1+\l_2) \le \dim H^0(\l_1+\l_2)-\dim L(\l_3)-\dim L(\l_1)=\dim M-2l$, so in fact $M=L(\l_1) \mid L(\l_1+\l_2)$.

\underline{Case $n=2l+1$}\\
Assume $3 \mid l$. We now consider $H$ to be the subgroup of type $D_{l}$ inside $G$ that fixes $e_0$, and as before denote by $\V$ its natural module, as well as $\L(\mu)$ for the irreducible $H$-module with highest weight $\mu$. The restriction $V \downarrow H=\tilde{V}\oplus (Ke_0)$, yields $S^3V \downarrow H =S^3(\V)\oplus S^2(\V)\oplus \V \oplus \tilde{T}$, where $\tilde{T}$ is trivial for $H$.
Now $S^3(\V)$ has composition factors as described in (a). Visibly, the submodule $N\subset S^3V$ corresponds to the direct sum of $\tilde{T}$ and the copy of $\L(3\l_1)$ inside $S^3(\V)$. Now by Proposition 4.7.4 in \cite{mcninch} and since $p\mid l$, we have that $S^2(\V)=\tilde{T}_1\mid \L(2\l_1)\mid \tilde{T}_2$ is indecomposable, where $\tilde{T}_1,\tilde{T}_2$ are trivial for $H$. Note also that $J \downarrow H$ decomposes as the direct sum of $\tilde{T}_2$ and the copy of $\L(\l_1)$ inside $S^3(\V)$. Combining these observations, we have $M=S^3V/(N \oplus J) \downarrow H = \L(\l_1+\l_2)\oplus M_0\oplus \V$, where $M_0=\tilde{T}_1 \mid \L(2\l_1)$ is indecomposable. 
As before, let $I\subset M$ be an irreducible $KG$-submodule. Note that $M$ has no trivial submodules. Comparing dimensions with Table \ref{tableB}, we see that $I$ must contain $\tilde{L}(\l_1+\l_2)$. Comparing now the dimension of $M/I$, we see that the only possibilities are $I=\L(\l_1+\l_2)\oplus \L(2\l_1)$ and $I=M$. Again $I$ contains a maximal vector with weight $\l_1+\l_2$, so that $I=L(\l_1+\l_2)$. By the same dimensional argument as for type $D_l$, $M=L(\l_1)\mid L(\l_1+\l_2)$.

\end{proof}
Proposition \ref{final} yields the remaining dimensions for the weight $\l_1+\l_2$ stated in Tables \ref{tableB} and \ref{tableD}. This concludes the proof of Theorem \ref{thmBCD}.
 
\setlength{\bibsep}{2pt}

\'Alvaro L. Mart\'inez, \emph{Imperial College London}, London SW7 2AZ, UK. \url{alm116@ic.ac.uk}

\end{document}